\documentclass[12pt,a4paper,twoside]{article}

\usepackage[utf8]{inputenc}
\usepackage[T1]{fontenc}
\usepackage[english]{babel}

\usepackage[dvipsnames]{xcolor}

\usepackage[hcentering]{geometry}

\usepackage{enumitem}

\usepackage{amsmath,amsfonts,amssymb,amsthm}%

\usepackage
{graphicx}

\graphicspath{{Images/}}

\usepackage{fancyhdr}
\pagestyle{fancy}
\addtolength{\headheight}{3pt}
\fancyhead[CE]{\textsc{p.\ gidoni and a.\ margheri}}
\fancyhead[CO]{ \textit{Lower bound on the number of periodic solutions}}
\fancyhead[L,R]{}
\fancyfoot[C]{{\thepage}}

\renewcommand{\phi}{\varphi}
\renewcommand{\theta}{\vartheta}
\renewcommand{\epsilon}{\varepsilon}

\newcommand{\field}[1]{\mathbb{#1}} 
\newcommand{\R}{\field{R}}
\newcommand{\Z}{\field{Z}}
\newcommand{\N}{\field{N}}

\newcommand{\A}{\field{A}}
\newcommand{\B}{\field{B}}
\newcommand{\Hb}{\field{H}}
\newcommand{\HM}{\field{H}_M}
\newcommand{\CC}{\mathcal{C}}
\newcommand{\BB}{\mathcal{B}}
\newcommand{\FF}{\mathcal{F}}
\newcommand{\PP}{\mathcal{P}}

\newcommand{\RR}{\mathcal{R}}
\newcommand{\HH}{\mathcal{H}}

\newcommand{\lmin}{\ell_\mathrm{min}}
\newcommand{\lmax}{\ell_\mathrm{max}}
\DeclareMathOperator{\Sp}{Sp}
\DeclareMathOperator{\dist}{dist}
\DeclareMathOperator{\sgn}{sgn}
\DeclareMathOperator{\meas}{meas}
\DeclareMathOperator{\arctanh}{arctanh}
\newcommand{\mydeg}{\mathfrak{D}}

\providecommand{\abs}[1]{\left\lvert#1\right\rvert}        
\providecommand{\norm}[1]{\left\lVert#1\right\rVert}    
\newcommand{\scal}[2]{\left\langle #1,#2\right\rangle}

\newcommand*{\diff}{\mathop{}\!\mathrm{d}}
\newcommand{\dd}{\diff}

\newtheorem{theorem}{Theorem}
\newtheorem*{theorem*}{Theorem}
\newtheorem{lemma}[theorem]{Lemma}
\newtheorem{prop}[theorem]{Proposition}
\newtheorem{corol}[theorem]{Corollary}
\theoremstyle{definition}
\newtheorem{defin}[theorem]{Definition}
\newtheorem{remark}[theorem]{Remark}

\title{Lower bound on the number of periodic solutions for asymptotically linear planar Hamiltonian systems}
\author{}
\date{}

\begin{document}
	\maketitle

\vspace{-1cm}
\centerline{\scshape Paolo Gidoni}
\medskip
{\footnotesize
	\centerline{Centro de Matemática, Aplicações Fundamentais e Investigação Operacional (CMAF – CIO),
		 }
	\centerline{Faculdade de Ciências da Universidade de Lisboa}
	\centerline{Campo Grande, Edificio C6,  	1749—016 Lisboa, Portugal}
} 
\bigskip

\centerline{\scshape Alessandro Margheri}
\medskip
{\footnotesize
	\centerline{Departamento de Matemática and Centro de Matemática, Aplicações Fundamentais }
	\centerline{e Investigação Operacional (CMAF – CIO), Faculdade de Ciências da Universidade de Lisboa}
\centerline{Campo Grande, Edificio C6,  	1749—016 Lisboa, Portugal}
}

\bigskip

\begin{abstract}
In this  work we prove the lower bound for the number of $T$-periodic solutions of an asymptotically linear planar Hamiltonian system. Precisely, we show that such a system, $T$-periodic in time, with $T$-Maslov indices $i_0,i_\infty$ at the origin and at infinity, has at least $\abs{i_\infty-i_0}$ periodic solutions, and an additional one if $i_0$ is even.
Our argument combines the Poincaré--Birkhoff Theorem with an application of topological degree.
We illustrate the sharpness of our result, and extend it to the case of second orders ODEs with linear-like behaviour at zero and infinity.
\end{abstract}

\bigskip
{\small
	\noindent\textbf{Keywords:} Poincaré--Birkhoff Theorem; Maslov index; topological degree; asymptotically linear Hamiltonian systems; periodic solutions.
	\\\smallskip
	\noindent\textbf{MSC2010 classification:} 37J45; 34C25; 70H12.
}
\newpage

\section{Introduction}

In the study of periodic solutions for planar Hamiltonian systems, we can traditional identify two main approaches: topological methods and variational ones. This alternative is very evident when we consider systems with twist between zero and infinity.

On the topological side, the main approach is based on the celebrated \emph{Poincaré--Birkhoff Theorem}, sometimes also called \emph{Poincaré's last geometric Theorem}. The Theorem assures the existence of two fixed points for every area-preserving homeomorphism of the planar annulus rotating the two boundary circles in opposite directions, and has led to several studies in Hamiltonian dynamics. We suggest \cite{DalReb} for an introduction to the result, and the references in  \cite{FSZ} for some recent applications.

On the variational side, a pivotal role is played by the seminal papers \cite{AZ,CZ}, employing \emph{Maslov's index}, also known as \emph{Conley--Zehnder index}, that have inspired a large number of generalizations (cf.~the book \cite{Abb}).
This approach applies to $2N$-dimensional Hamiltonian systems which are asymptotically linear at the origin and at infinity. To such systems it is possible to assign a couple of indices $i_0, i_\infty$, describing the behaviour of the two associated linear system.
Roughly speaking,  the index counts the half-rotations in $\Sp(1)$ made by the path $t\to \Psi(t)$ defined by the fundamental solution of a linear Hamiltonian system, in a given time interval $[0,T]$, cf.~Section \ref{sec:linear}. Twist corresponds to the case $i_0\neq i_\infty$: it assures the existence of a $T$-periodic solution, and that of a second one if the first is nondegenerate.

The connection between such variational results and the Poincaré--Birkhoff Theorem was already observed by Conley and Zehnder in \cite{CZ}, where they conjectured that the integer $\abs{i_\infty -i_0}$ is a measure of the lower bound of $T$-periodic solutions.

This relationship was made even clearer in \cite{MRZ}. Here, the rotational informations contained in the Maslov indices at zero and infinity were made explicit, and combined with  a modified Poincaré--Birkhoff Theorem, to obtain a higher multiplicity of periodic solutions, and partially proving the lower bound $\abs{i_\infty -i_0}$, under some conditions on the parity of the indices. Such results have later been applied and extended also to the case of resonance, cf.~\cite{GaMaRe,MRT}.

In this paper we continue this line of investigation, showing that the Maslov index contains even more information on the topological behaviour of the solutions of the systems. We use this to improve the result in \cite{MRZ}, finally obtaining the existence of at least $\abs{i_\infty -i_0}$ $T$-periodic solutions, independently by the parity of the indices. Moreover,   we show that asymptotic linearity, both at the origin and at infinity, can be replaced by a weaker condition of \lq\lq linear-like\rq\rq\ behaviour, requiring only that the vector field is bounded between two linear ones, having the same Maslov index.

\medbreak
In order to discuss more precisely our results, let us first introduce the framework of our work.
We consider a planar Hamiltonian system 
\begin{equation}\label{eq:HSplanar}
\dot z=JD_z  H(t,z)
\end{equation} 
where $D_z$ denotes the gradient with respect to the $z$-variable, and $J=\begin{pmatrix}
0& 1\\ -1 & 0
\end{pmatrix}$.
Concerning the regularity of the Hamiltonian function $H:\R^2\to \R,$ we make the following assumptions:
\begin{enumerate}[label={\textup{(H$_\mathrm{reg}$)}}]
\item The Hamiltonian function $H$ is continuous,  periodic in $t$ with period $T$, continuously differentiable in $z$ with $D_zH(t,\cdot)$ Lipschitz continuous uniformly in time.  \label{cond:h1}

	\end{enumerate}
As we anticipated, we are interested in the case when the system is asymptotically linear at zero and infinity, namely
\begin{enumerate}[label={\textup{(H$_0$)}}]
	\item For every $t\in[0,T]$, the Hamiltonian function $H(t,z)$ is twice differentiable at the origin $z=0$ with respect to the space variable $z$, with $D_z H(t,0)=0$ and $D_{zz} H(t,0)=\A(t)\in\CC([0,T],\R^{2\times 2})$ \label{cond:h0}
\end{enumerate}
\begin{enumerate}[label={\textup{(H$_\infty$)}}]
	\item There exists a matrix $\B(t)\in\CC([0,T],\R^{2\times 2})$ such that \begin{equation*}
	\displaystyle\lim_{\norm z \to +\infty}\frac{ \norm{D_zH(t,z)-\B(t)z}}{\norm{z}}=0
	\end{equation*}
 uniformly in $t$. 
	\label{cond:hinfty}
\end{enumerate}

Provided that the matrices $\A$ and $\B$ are $T$-nonresonant and that their $T$-Maslov indices $i_0\neq i_\infty$ are different, the variational results in \cite{CZ} assures the existence of at least one $T$-periodic solution of \eqref{eq:HSplanar}.

However, when the difference between the Maslov indices is higher, we can increase the multiplicity of $T$-periodic solutions quite straightforwardly, by an iterated application of the Poincaré--Birkhoff Theorem.
In this way, when the Maslov indices are both odd, we obtain the existence of at least $\abs{i_\infty -i_0}$ periodic solutions. Since the twist produced when $\abs{i_\infty -i_0}=1$ is sufficient to prove the existence of a periodic solution, we would intuitively expect a unitary increase of the twist to be sufficient to find an additional solution, hence giving the existence of $\abs{i_\infty -i_0}$ periodic solutions also in the general case.
However the classical Poincaré--Birkhoff Theorem is not the right tool to exploit this additional twist: with this approach we find only $\abs{i_\infty -i_0}-1$ solutions if only one of the indices is even, and $\abs{i_\infty -i_0}-2$ solutions if both indices are even. 

\medbreak

Searching for these additional solutions, in \cite{MRZ} it was proposed a variation of the Poincaré--Birkhoff  Theorem, where the rotation assumption on one of the boundaries was weakened, requiring the desired direction of rotation only in one point of the boundary.
As shown in the same paper, that result can be applied to planar Hamiltonian systems, with a weak rotation condition near the origin and a standard rotation condition at infinity. Such is the situation of asymptotically linear Hamiltonian systems with even Maslov index at the origin, for which the Theorem can be used to recover two additional solutions. 

Unfortunately, the same approach cannot be applied to the case in which we have a standard rotation condition at the origin and a weak rotation condition at infinity, as illustrated by the counterexample produced in \cite{CMMR_04}. This correspond to the case in which the Maslov index at infinity is even. 

\medbreak
The main purpose of this paper is to amend this situation and obtain an optimal lower bound, taking into account also the additional twist provided by an even index at infinity.  In addition to the Poincaré--Birkhoff  Theorem, we use an argument based on topological degree to show the existence of at least $\abs{i_\infty -i_0}$ solutions in all cases. Moreover, if $i_0$ is even, a characterization of the fixed point index for planar area-preserving maps \cite{Sim} allows to recover one additional solution.
More precisely, we prove the following (see Theorem~\ref{th:main}).
\begin{theorem*} 
	Let us consider the Hamiltonian system \eqref{eq:HSplanar}  and assume that \ref{cond:h1}, \ref{cond:h0}, \ref{cond:hinfty} are satisfied. Suppose that the linear systems at zero and infinity are $T$-nonresonant  and denote respectively with $i_0$ and  $i_\infty$ their $T$-Maslov indices. Then system \eqref{eq:HSplanar} has  at least $\abs{i_\infty -i_0}$ $T$-periodic solutions. Moreover, if $i_0$ is even, with $i_0\neq i_\infty$, then the number of solution is at least $\abs{i_\infty -i_0}+1$.
\end{theorem*}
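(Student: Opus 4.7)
The plan is to obtain the lower bound by combining three tools: an iterated application of the classical Poincaré--Birkhoff theorem (in the spirit of \cite{MRZ}), a topological degree argument that recovers the solutions ``lost'' when one of the Maslov indices is even, and a fixed-point index estimate of Simon \cite{Sim} that provides the additional solution in the case $i_0$ even. Without loss of generality I assume $i_\infty>i_0$.

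Hypothesis \ref{cond:h1} guarantees that the Poincaré map $P_T$ of \eqref{eq:HSplanar} is a global area-preserving homeomorphism of $\R^2$, and by \ref{cond:h0}, \ref{cond:hinfty} together with the nonresonance of $\A$ and $\B$, the angular displacement of the lift $\tilde P_T$ to the universal cover of $\R^2\setminus\{0\}$ is controlled: circles of small radius rotate asymptotically by $i_0\pi$ per period, circles of large radius by $i_\infty\pi$ per period. For every integer $k$ with $i_0<2k<i_\infty$, I would then select a small inner radius and a large outer radius so that the corresponding annulus satisfies the twist condition of the classical Poincaré--Birkhoff theorem shifted by $k$ full turns, producing two $T$-periodic solutions winding $k$ times around the origin in time $T$. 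Elementary counting shows that this yields $|i_\infty-i_0|$ solutions when both indices are odd, $|i_\infty-i_0|-1$ when exactly one is even, and $|i_\infty-i_0|-2$ when both are even.

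To recover the missing one or two solutions when an index is even, I would consider annular regions $\BB$ whose boundary circles sit at non-critical rotation levels straddling the half-integer value $i_0/2$ or $i_\infty/2$ that the Poincaré--Birkhoff iteration cannot address, and compute the Brouwer degree $\deg(\mathrm{Id}-P_T,\BB,0)$ by a homotopy to the linear Poincaré map, for which nonresonance and nondegeneracy make the computation explicit. The fixed points of Step~1 contribute a known total fixed-point index; the residual part of the degree forces an additional fixed point in each missing level, restoring the universal lower bound $|i_\infty-i_0|$ in all parity cases. When $i_0$ is even, the extra solution is then extracted as follows: the origin is an isolated fixed point of $P_T$, and by Simon's theorem \cite{Sim} its fixed point index as a fixed point of an area-preserving planar homeomorphism is at most $1$. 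On the other hand, the degree of $\mathrm{Id}-P_T$ on a small disk around $0$ can be computed from the linear model at the origin and strictly exceeds $1$ when $i_0$ is even, so at least one further fixed point must lie inside the disk, yielding the claimed additional periodic solution.

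The main obstacle is the bookkeeping in the degree argument: the fixed points detected by topological degree must be shown to be genuinely new, i.e.\ distinct from the Poincaré--Birkhoff solutions of Step~1 and from the origin. This requires separating solutions by the rotation number of their orbits (integer values for the PB solutions, and the missing half-integer value for the new ones), and controlling rotation uniformly along the homotopies used to evaluate the degree, which is delicate near the boundary circles. A secondary difficulty is proving the asymptotic rotation estimates on $\tilde P_T$ rigorously under the mere asymptotic linearity \ref{cond:h0}, \ref{cond:hinfty}, and subsequently extending the whole scheme to the weaker ``linear-like'' framework announced in the abstract, for which only a two-sided comparison with linear fields (and not their explicit linearizations) is available.
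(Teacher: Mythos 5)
Your overall strategy (iterated Poincaré--Birkhoff, a degree argument for the even-index levels, and Simon's bound on the fixed-point index of area-preserving maps) is the same as the paper's, and your Step 1 matches point (a) of the paper's proof. However, the way you propose to produce the remaining solutions has a genuine gap. Your extraction of the extra solution when $i_0$ is even rests on a wrong computation: the fixed-point index of the origin as an isolated fixed point of $P_T$ equals $\sgn \det \bigl(I-\Psi_\A(T)\bigr)$, which is $-1$ when $i_0$ is even (the monodromy matrix then lies in $\Gamma^-=\{\det(I-M)<0\}$) and $+1$ when $i_0$ is odd; it never ``strictly exceeds $1$'', so Simon's bound $\le 1$ yields no contradiction and forces no additional fixed point near the origin. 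The value $\pm 2$ that actually drives the argument is not the planar degree of $\mathrm{Id}-P_T$ on a disk, but the degree $\mydeg\bigl(\FF+(2M\pi,0),r\bigr)$ of the \emph{lifted and translated} displacement map on the covering strip, which equals $-2$ precisely when $2M$ coincides with the Maslov index and $0$ otherwise (Corollary~\ref{corol:deg_linear}). The paper evaluates this degree on an annulus \emph{away from the origin}, obtains $+2$ at the level $M=1-\lmin$ when $i_0=2(\lmin-1)$, and applies Simon's bound to the putative unique zero \emph{inside that annulus} to exclude a single zero of index $2$, thereby forcing two distinct solutions there. Applying Simon at the origin cannot substitute for this.

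The difficulty you flag as ``bookkeeping'' --- showing that the degree-produced fixed points are distinct from the Poincaré--Birkhoff ones --- is also not resolvable along the lines you sketch: the degree of $\mathrm{Id}-P_T$ on planar annuli is blind to winding numbers, and subtracting ``the known total fixed-point index of the Step-1 solutions'' would require index information you do not possess. The paper avoids the issue by construction: the solutions found at each stage are zeros of $\FF+(2M\pi,0)$ for \emph{different} integers $M$, hence have different winding numbers and are automatically distinct, with no a posteriori separation needed. Without adopting this lifted-and-translated formulation (or an equivalent device that tracks rotation), Steps 2 and 3 of your plan do not close.
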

This lower bound on the number of periodic solutions is sharp. In Remark \ref{rem:sharp} we illustrate how to construct suitable planar Hamiltonian systems with exactly the number of periodic solutions given by Theorem~\ref{th:main}.

\medbreak

Our method, based on topological degree,  gives a way  to properly characterize the weak twist generated by only partial rotation, as those produced by an even Maslov index. In this, as discussed in Remark \ref{rem:counter}, we clarify the counterexample in \cite{CMMR_04}, filling the missing part, which prevents the swap of the boundary condition in the modified Poincaré--Birkhoff Theorem in \cite{MRZ}. 

We remark that our method recovers the exact number of rotations for each periodic solution; also in this we improve the main result of \cite{MRZ}, where no rotation estimate was provided for the additional solution in case of even $i_0$.

To fully illustrate the topological nature of our result, in Section \ref{sec:linlike} we replace asymptotic linearity with a more general condition of linear-like behaviour at both zero and infinity. In such conditions, we require that the vector field is bounded by two linear ones, with the same Maslov index, in some neighbourhood of the origin, and/or of infinity. We enunciate these examples in the simpler case of second order ODEs, but, as discussed in Remark \ref{rem:linlike_planar}, the results hold also in the general case of a planar system \eqref{eq:HSplanar}.


\section{Rotation properties for linear and asymptotically linear systems}	\label{sec:linear}
\subsection{Rotation properties for linear systems}

 The  rotational properties of the linear $T$-periodic planar Hamiltonian system
\begin{equation}\label{eq:planar_linear}
\dot z=JL(t)z
\end{equation}
in terms of its Maslov index  are well known for the nonresonant case (see \cite{MRZ}).  In this section, we   review how they were obtained, with the aim to   present an    explicit formula   for  the the $T$-Poincaré map
$\PP_T(\phi,r)$ of the lift of system \eqref{eq:planar_linear} to $\R\times \R^+$, given by polar coordinates.

The periodicity of $\PP_T(\phi,r)$ in $\phi$ will allow us to introduce, for each fixed $r>0$, a degree characterization of $\FF(\phi,r)=\PP_T(\phi,r)-(\phi,r)$ for nonresonant systems. 

This computation   will lead quite directly   to our multiplicity results of the next section.  For completeness,  we also show  how adapt the  formula of $\PP_T$ to cover the resonant case.

\medbreak

We start by recalling  that the evolution of  system  \eqref{eq:planar_linear} is described by the fundamental solution  matrix $\Psi(t)$,  such that the solution of the Cauchy problem  \eqref{eq:planar_linear} with initial condition $z(0)=z_0$ is $\Psi(t)z_0$.

To analyse the rotational properties of the system, it is natural to  study its evolution in polar coordinates $(\phi,r)$; for our purposes we adopt \emph{clockwise} polar coordinates.  
This is done by   lifting    the linear vector field  \eqref{eq:planar_linear} from $ \R^2\setminus \{0\}$  to its universal covering  $(\Pi, \R\times\R^+ $), where the  covering projection $\Pi$ is given by $z=\Pi(\phi,r)=(r\cos\phi, -r\sin\phi)\neq 0$. We remark that since we define angles clockwise, the map $\Pi$ is orientation preserving. The dynamics of the  equivalent lifted system on the covering $\R\times \R^+$ is described 
by the Poincaré time map
\begin{equation} \label{eq:linear_Pmap}
\PP(t,\phi_0,r_0)=(\phi_0+\Theta(t,\phi_0),r_0 \RR(t,\phi_0))
\end{equation}
which satisfies $\PP(0,\cdot,\cdot)=I$, and for any $t$
\begin{equation*}
\Psi(t)z_0=\Pi(\PP(t,\phi_0,r_0))
\end{equation*}
where $z_0=(r_0 \cos \phi_0, -r_0 \sin \phi_0).$
The fact that the functions $\Theta$ and $\RR$ do not depend on the radial component is a consequence of the linearity of $\Psi(t)$.
The map $\PP$ is an homeomorphism of $\R\times\R^+$  such that $\Theta/2\pi$ measures the rotation of a solution around the origin in the interval $[0,t]$. 
Also, we notice that the map $\PP$ is an homeomorphism of $\R\times\R^+$  such that $\Theta/2\pi$ counts exactly the windings made by the solutions around the origin.

\medbreak
In what follows we are going  to translate in terms of $\Theta$ and $\RR$  some classical results on $\Psi$.
To do so, we first recall some notations and facts (see \cite{Abb,Gl,MRZ}).

The  fundamental matrix  $t\to \Psi(t)$ of \eqref{eq:planar_linear} describes for $t\in [0,T]$ a  continuous  path in  the symplectic group $\Sp(1)$ starting form  $\Psi(0)=I$.

We can represent $\Psi(t)$ in the covering space $(\R^+\times \R\times \R,\zeta)$ of $\Sp(1)$, with
\begin{equation*}
\zeta(\tau,\sigma,\theta)=P(\tau,\sigma)R(\theta)
\end{equation*}
Here 
\begin{equation*}
R(\theta)=\begin{pmatrix}
\cos \theta & \sin \theta \\
-\sin \theta & \cos \theta
\end{pmatrix}\in \Sp(1) 
\end{equation*}
is the clockwise rotation of angle $\theta$, and 
\begin{equation*}
P(\tau, \sigma)=
\begin{pmatrix}
\cosh \tau + \sinh \tau  \cos \sigma &  \sinh \tau \sin \sigma\\
\sinh \tau \sin \sigma & \cosh \tau - \sinh \tau  \cos \sigma
\end{pmatrix}\in \Sp(1)
\end{equation*}
is a positive definite symmetric matrix representing an hyperbolic rotation. Let us therefore describe the path $t\to\Psi(t)$ as
\begin{equation}\label{eq:psi_param}
\Psi(t)=P(\tau(t),\sigma(t))R(\theta(t))
\end{equation}
for suitable continuous  functions $\tau(t),\sigma(t),\theta(t)$ satisfying $\theta(0)=\tau(0)=0$.

This characterization of the path $\Psi(t)$ can be used to define the 
corresponding $T$-Maslov index. To do that, let us consider the reparametrization  $\tau=\tau(\rho)=\arctanh \sqrt{\rho}$. The variables $(\rho,\sigma,\theta)\in [0,1[\,\times [-\pi,\pi[\,\times[0,2\pi[\,$ provide a description in polar coordinates of the interior of a solid torus, and therefore the map $(\rho,\sigma,\theta)\mapsto \Phi(\tau(\rho),\sigma,\theta)$ defines a homeomorphism between $\{z\in \R^2, \abs{z}<1\}\times \mathbb{S}^1$ and  $\Sp(1)$. The set of resonant matrices
\begin{equation*}
\Gamma^0=\{M\in\Sp(1): \det (I-M)=0\}=\left\{(\rho,\sigma,\theta): \rho=\sin^2\theta \, \text{and}\, \abs{\theta}<\frac{\pi}{2}\right\}
\end{equation*}
corresponds to a surface that divides $\Sp(1)$ in two connected components, contractible in $\Sp(1)$, that we can identify as
\begin{align*}
\Gamma^-&=\{M\in\Sp(1): \det (I-M)<0\}=\left\{(\rho,\sigma,\theta): \rho>\sin^2\theta \, \text{and}\, \abs{\theta}<\frac{\pi}{2}\right\}\\
\Gamma^+&=\{M\in\Sp(1): \det (I-M)>0\}	\\
&=\left\{(\rho,\sigma,\theta): \rho<\sin^2\theta \, \text{and}\, \abs{\theta}<\frac{\pi}{2}\right\}\cup 
\left\{(\rho,\sigma,\theta):  \abs{\theta}\geq\frac{\pi}{2}\right\}
\end{align*}
Thus, each path $\Psi\colon [0,T]\to \Sp(1)$, with $\Psi(0)=I$ and $\Psi(T)\notin\Gamma^0$ can be prolonged, without crossing $\Gamma^0$, to a path $\hat \Psi$ arriving at $-I$, if $\Psi(T)\in \Gamma^+$, or in $\scriptstyle\begin{pmatrix} 2 & 0 \\ 0 & \scriptstyle\frac{1}{2} \end{pmatrix}$, if $\Psi(T)\in \Gamma^-$. The $T$-Maslov index $i_T(\Psi)$ of the path is the integer counting the counter-clockwise half-windings in $\Sp(1)$ made by the extension $\hat \Psi$. We remark that the choice of the extension does not change the index.
\medbreak

Let us now pick $\bar \theta \in [-\pi,\pi[\,$ such that $\bar \theta \equiv \theta (T) \pmod{2\pi}$, and let $\bar \tau=\tau(T)$, $\bar \sigma=\sigma(T)$. The matrix $P(\bar \tau,\bar \sigma)$ has eigenvalues $\{e^{-\bar \tau},e^{\bar \tau}\}$; we denote with $\{v_1,v_2\}$ the associated orthonormal basis such that $\theta_0:=\arg (v_1)\in [0,\pi[\,$.
Let us denote with $i_T(\Psi)\in \Z$ the $T$-Maslov index of \eqref{eq:planar_linear} and define $\ell=\lfloor i_T(\Psi)/2\rfloor$.  We have the following result.

\begin{lemma}\label{lem:poincaremap}
	The Poincaré map $\PP_T=\PP(T,\cdot,\cdot)$ associated to \eqref{eq:planar_linear} 
	 has the form $\PP_T(r,\phi)=(\phi+\Theta_T(\phi), r\RR_T(\phi))$, where
	\begin{align}
	\Theta_T(\phi)&:=\Theta(T,\phi)= \bar \theta -K\pi + g(\phi +\bar \theta)\\
	\RR_T(\phi)&:=\RR(T,\phi)= \sqrt{e^{-2\bar{\tau}}\cos^2(\phi+\bar{\theta}-\theta_0)
		+e^{2\bar{\tau}}\sin^2(\phi+\bar{\theta}-\theta_0)}, \label{eq:R_formula}
	\end{align}
 
\begin{equation}\label{eq:laps_count}
		K=\begin{cases} 
	2\ell &\text{if $i_T(\Psi)=2\ell$}\\
	2\ell &\text{if $i_T(\Psi)=2\ell+1$ and $\bar \theta< 0$}\\
	2\ell+2 &\text{if $i_T(\Psi)=2\ell+1$ and $\bar \theta> 0$}
	\end{cases}
\end{equation}
	and
	$g\colon \R\to\,\left]-\frac{\pi}{2},\frac{\pi}{2}\right[\,$ is the function satisfying:
	\begin{enumerate}[label=\textup{(P\arabic*)}]
		\item $g$ is odd with respect to any point $\alpha_0\in \theta_0 +\frac{\pi}{2}\Z$, and hence it is $\pi$-periodic; \label{prop:P1}
		\item in the half period $[\theta_0,\theta_0+\frac{\pi}{2}]$ we have $$g|_{[\theta_0,\theta_0+\frac{\pi}{2}]}(\alpha)=\arccos \frac{1+(e^{2\bar \tau }-1)\sin^2(\alpha-\theta_0)}{\sqrt{1+(e^{4\bar \tau }-1)\sin^2(\alpha-\theta_0)}}\in \left[0,\frac{\pi}{2}\right[\,$$\label{prop:P2}
\item if $i_T(\Psi)=2\ell$, then $\Psi(T)\in \Gamma^-$. Hence $\bar \theta \in \,]-\frac{\pi}{2},\frac{\pi}{2}[\,$ and
		\begin{align*}
		\max g=-\min g>\abs{\bar \theta} 
		&&	0<\max g +\abs{\bar \theta}<\pi
		\end{align*}\label{prop:P3}
		\item if $i_T(\Psi)=2\ell+1$, then  $\Psi(T)\in \Gamma^+$. Hence $\bar \theta \in [-\pi,\pi[\,\setminus \{0\}$ and
		\begin{align*}
		\max g=-\min g<\abs{\bar \theta} 
		&&	\max g +\abs{\bar \theta}>0
		\end{align*}\label{prop:P4}

	\end{enumerate}
	
\end{lemma}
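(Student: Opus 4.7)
The plan is to compute the Poincaré map directly from the decomposition $\Psi(T) = P(\bar\tau, \bar\sigma)R(\bar\theta)$ provided by \eqref{eq:psi_param}. Since $R(\bar\theta)$ is the clockwise rotation by $\bar\theta$, which matches our clockwise polar convention, its action in polar coordinates is simply $(\phi, r) \mapsto (\phi + \bar\theta, r)$. So the task reduces to computing the action of the positive definite symmetric factor $P(\bar\tau, \bar\sigma)$ at the intermediate angle $\alpha := \phi + \bar\theta$. Switching to the orthonormal eigenbasis $\{v_1, v_2\}$ of $P$---where $v_1$ at clockwise angle $\theta_0$ is the eigenvector for $e^{-\bar\tau}$---the vector $\Pi(\alpha, r)$ has components $(r\cos(\alpha - \theta_0),\ r\sin(\alpha - \theta_0))$ in this frame; multiplying by the diagonal scaling and converting back to polar coordinates gives at once the radial formula \eqref{eq:R_formula} and an angular increment $g(\alpha)$ whose cosine, after multiplying numerator and denominator by $e^{\bar\tau}$, is precisely the expression in \ref{prop:P2}. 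The $\pi$-periodicity and reflectional symmetry claimed in \ref{prop:P1} follow from the dependence of $g$ only on $\sin^2(\alpha - \theta_0)$, the sign flip reflecting the fact that the hyperbolic rotation pushes points symmetrically toward the attracting direction $v_2$.

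To fix $K$ I will use that $\Theta_T$ is the \emph{continuous} lift of the rotation action of $\Psi(T)$ starting from $\Theta_0 = 0$. Writing $\theta(T) = \bar\theta + 2\pi n$ for a uniquely determined $n \in \Z$, the continuous lift must equal $\theta(T) + g(\phi + \bar\theta)$, so matching with the stated formula gives $K = -2n$. To identify $n$ from $i_T(\Psi)$, I will exploit the topological definition of the Maslov index: the extension $\hat\Psi$ of $\Psi|_{[0, T]}$ reaches $\mathrm{diag}(2, 1/2)$ (where $\theta \equiv 0 \pmod{2\pi}$) if $\Psi(T) \in \Gamma^-$, and $-I = R(\pi)$ (where $\theta \equiv \pi \pmod{2\pi}$) if $\Psi(T) \in \Gamma^+$, and the counter-clockwise half-turn count of $\hat\Psi$ in $\Sp(1)$ equals $i_T(\Psi)$. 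When $i_T = 2\ell$ ($\Psi(T) \in \Gamma^-$), the extension stays inside a single connected component of $\Gamma^-$, so the lifted $\theta$ remains in an open interval of width $\pi$; this pins the terminal $\theta$-value to $-2\pi\ell$, giving $K = 2\ell$. When $i_T = 2\ell + 1$ ($\Psi(T) \in \Gamma^+$), the analogous connectedness argument within $\Gamma^+$ shows that the terminal $\theta$-value is $-(2\ell + 1)\pi$ or $-(2\ell - 1)\pi$ depending on the sign of $\bar\theta$, yielding the two subcases in \eqref{eq:laps_count}.

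Properties \ref{prop:P3} and \ref{prop:P4} reduce to an explicit evaluation of $\max g$. An elementary calculus computation on the formula in \ref{prop:P2} shows that the maximum is attained at $\sin^2(\alpha - \theta_0) = 1/(e^{2\bar\tau} + 1)$, giving $\sin^2(\max g) = \tanh^2 \bar\tau$. Since $\rho = \tanh^2\bar\tau$ is the radial coordinate of $\Psi(T)$ in the parametrization $\zeta$ of $\Sp(1)$, the defining inequality $\rho > \sin^2\bar\theta$ for $\Gamma^-$ translates directly into $\max g > |\bar\theta|$, with the remaining bounds in \ref{prop:P3} following from $|\bar\theta| < \pi/2$ and $\max g < \pi/2$. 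Dually, $\rho < \sin^2\bar\theta$ in $\Gamma^+$ (when $|\bar\theta| < \pi/2$) gives $\max g < |\bar\theta|$, the inequality being trivial when $|\bar\theta| \geq \pi/2$; the nonvanishing of $\max g + |\bar\theta|$ comes from $\bar\theta \neq 0$, a direct consequence of nonresonance on $\Gamma^+$.

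I expect the main obstacle to be the identification of $K$ in the second paragraph. The direct polar computation only determines $\Theta_T$ modulo $2\pi$, so pinning the integer $K$ requires a careful tracking of the lifted $\theta$-function throughout $[0, T]$ together with an explicit check of the orientation convention relating the direction of increasing $\theta$ in $\zeta$ to the counter-clockwise half-turn direction in $\Sp(1)$ used in the definition of the Maslov index. The split in \eqref{eq:laps_count} based on the sign of $\bar\theta$ is particularly delicate and will require a direct topological argument about connected components of $\Gamma^+$ to decide which of the two candidate odd multiples of $\pi$ is reached by $\hat\Psi$.
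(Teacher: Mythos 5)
Your proposal is correct, and its core coincides with the paper's: both rest on the factorization $\Psi(T)=P(\bar\tau,\bar\sigma)R(\bar\theta)$ and the computation of $g$ and $\RR_T$ in the orthonormal eigenbasis $\{v_1,v_2\}$, with (P1) obtained from the reflectional symmetry of the hyperbolic rotation about its invariant axes. You diverge from the paper in two steps, in both cases defensibly. First, to pin down $K$ the paper homotopes the path $t\mapsto\Psi(t)$ rel endpoints to a two-stage normal form (rigid rotation by $\bar\theta-K\pi$ on $[0,T/2]$, then the hyperbolic rotation on $[T/2,T]$) and uses homotopy invariance of the winding of solutions; you instead track the continuous lift $\theta(t)$ directly and read off $K$ from the component structure of the preimages of $\Gamma^{\pm}$ in the universal cover of $\Sp(1)$ (width-$\pi$ intervals of $\theta$ for $\Gamma^-$, width-$2\pi$ for $\Gamma^+$, which is exactly what produces the $\bar\theta\gtrless 0$ split in \eqref{eq:laps_count}). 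These are essentially equivalent; your version makes explicit a bookkeeping step the paper delegates to \cite{MRZ}. The one compressed point is the identity $\Theta(t,\phi_0)=\theta(t)+g_t(\phi_0+\theta(t))$ for \emph{all} $t$: it needs the remark that both sides are continuous in $t$, agree modulo $2\pi$ because $g_t$ is the angular effect of $P(\tau(t),\sigma(t))$ and takes values in $\left]-\frac{\pi}{2},\frac{\pi}{2}\right[$, and vanish at $t=0$; you should state this explicitly. Second, for (P3)--(P4) the paper invokes monotonicity of the range of $g$ in $\bar\tau$ together with the limiting equality $\max g=\abs{\bar\theta}$ on the resonant surface, again citing \cite{MRZ}; your computation $\sin^2(\max g)=\tanh^2\bar\tau=\rho$ (which I checked: the critical point is indeed at $\sin^2(\alpha-\theta_0)=1/(e^{2\bar\tau}+1)$) converts the inequalities $\rho\gtrless\sin^2\bar\theta$ defining $\Gamma^{\mp}$ directly into $\max g\gtrless\abs{\bar\theta}$, which is a cleaner and fully self-contained derivation of those two properties.
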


\begin{proof}
The proof follows the same lines of Lemma 4 in  \cite{MRZ}, where, however, the radial component \eqref{eq:R_formula} was not studied. 

The path $t\to \Psi(t)$, parametrized as in \eqref{eq:psi_param}, is homotopically deformed in $\Sp(1)$   to a  path $t\to \tilde\Psi(t)$ with the same endpoints  for which  it is easier to compute  the rotations  of the solutions, which are the same as the ones of $t\to \Psi(t)$.
Such path    is defined as (see  \cite{MRZ})
	\begin{equation*}
	\tilde\Psi(t)=P(\tilde\tau(t),\tilde\sigma(t))R(\tilde\theta(t))
	\end{equation*}
	with
	\begin{align*}
	\tilde\tau(t)&=\tau(T)\max \left\{\frac{2t}{T}-1,0\right\} &
	\tilde\sigma(t)&=\sigma(t)\\
	\tilde\theta(t)&=\theta(T)\min \left\{\frac{2t}{T},1\right\} &&
	\end{align*}

The action of $\tilde{\Psi}$ on $\mathbb{\R}^2$  in $[0,\frac{T}{2}]$  is that of  a rigid rotation around $z=0$ of angle  $\theta(T)=\bar \theta -K\pi,$ where the integer $K$ is associated to the Maslov $T$-index of $\Psi$ as in \eqref{eq:laps_count}, whereas in $[\frac{T}{2},T]$    $\tilde{\Psi}$ acts as  
the hyperbolic rotation $P(\bar \tau,\bar \sigma)$.  This gives the structure of  the map $\PP_T$  described in the lemma. In fact, the angular term $\Theta_T$ is the sum  of the  constant $\bar \theta -K\pi$ corresponding  to the first half period, plus a bounded term $g(\phi+\bar \theta)$ produced by the hyperbolic rotation, and depending on the angle $\phi+\bar \theta$ after the rotation. The radial deformation $\RR_T$  is produced entirely by the hyperbolic rotation, and thus depends again on the angle $\phi+\bar \theta$.
	 
	 \medbreak
	 
The computation of the functions $g$ and $\RR_T$  proceeds as follows. 
	 
A point $\hat w\in \R^2$, represented as $(\hat r,\hat\phi)$ in polar coordinates, can be expressed in the coordinates $\{v_1,v_2\}$ as
\begin{equation*}
\hat w=\abs{\hat w}(y_1v_1+y_2v_2)
\end{equation*}
	 where
	 \begin{align*}
	 	y_1=\cos (\hat \phi-\theta_0) && y_2=\sin  (\hat \phi-\theta_0)
	 \end{align*}
Thus we can express the hyperbolic rotation $P(\bar \tau,\bar \sigma)$ as  	 
	 \begin{equation}
	 	P(\bar \tau,\bar \sigma)\hat w=\abs{\hat w}\left(y_1 e^{-\bar \tau}v_1+y_2 e^{\bar \tau}v_2\right)
	 \end{equation}
	 
Recalling that we are considering $\hat \phi= \phi+\bar \theta$, we obtain straightforwardly \eqref{eq:R_formula} for the radial component. 

To characterize the term $g$, we observe that it describes the rotation produced by the hyperbolic rotation $P(\bar \tau,\bar \sigma)$. Thus $g(\hat \phi)=0$ for every $\hat \phi\in \theta_0+\frac{\pi}{2}\Z$, corresponding to the points on the two invariant lines. Moreover, the vector field defined by $P(\bar \tau,\bar \sigma)$ is symmetric with respect to each of those axis, implying \ref{prop:P1}. Regarding the exact value of $g$, a straightforward computation leads to
\begin{equation}\label{eq:monotonetau}
\cos g(\hat \phi)=\frac{\scal{P(\bar \tau,\bar \sigma)\hat w}{\hat w}}{\abs{P(\bar \tau,\bar \sigma)\hat w}\abs{\hat w}}=\frac{y_1^2 e^{-\bar \tau}+y_2^2 e^{\bar \tau}}{\RR_T(\hat \phi)}
=\frac{1+(e^{2\bar \tau }-1)\sin^2(\hat \phi-\theta_0)}{\sqrt{1+(e^{4\bar \tau }-1)\sin^2(\hat \phi-\theta_0)}}
\end{equation}
We notice that $\cos g(\hat \phi)>0$. Moreover, our choice of the eigenvectors implies that $g$ is positive in a right neighbourhood of $\theta_0$. Combining these two facts we deduce \ref{prop:P2}; the values of $g$ on all the domain can be recovered combining \ref{prop:P1} and \ref{prop:P2}.
As to \ref{prop:P3}, \ref{prop:P4},  they follow  from the monotonicity in $\bar\tau$  of the range of $g,$ which is a consequence of \ref{prop:P2},  and from the fact that if $\Psi(T)$ belongs to the resonant surface $\Gamma_0$  then $\,\,\,\max g=-\min g=\abs{\bar \theta}$  (see \cite{MRZ} for the details). 
\end{proof}
\begin{remark}
A version of  Lemma \ref{lem:poincaremap}  can be stated  for  resonant systems. In this case    $1\in \sigma(\Psi(T)),$  and the path  $t\to \Psi(t)\in$ Sp(1), $\,\,t\in [0,T],\,\,\Psi(0)=I,\, $  ends on the resonant surface, that is   $\Psi(T)\in \Gamma^0.$ For such paths the   Maslov-type index (see \cite{Long})   is defined as the pair $(i_T(\Psi), \nu_T(\Psi))\in \Z\times \{1,2\},\,$ 
where $i_T(\Psi)$  is the minimum value of the  Maslov index attainable among  all the nonresonant  continuous paths  in   Sp(1) starting at $I$ for $t=0$   which are  sufficiently  $ C^0([0,T])$-close to  $\Psi,$  and  $\nu_T(\Psi)=\ker(I-\Psi(T))$  is the nullity of $\Psi(T).$ 
Of course, in such generalized setting  the Maslov index $i_T(\Psi)$ of a non resonant path  is identified with the pair $(i_T(\Psi),0).$ 

Using this definition, we see that  formulas \eqref{eq:psi_param} and \eqref{eq:R_formula} and properties \ref{prop:P1} and \ref{prop:P2} of  Lemma \ref{lem:poincaremap}  still hold in the resonant case   
with  \eqref{eq:laps_count}  modified as follows: 
\begin{equation}\label{eq:laps_count_res}
		K=\begin{cases} 
	2\ell &\text{if $(i_T(\Psi),\nu_T(\Psi)) \in (2\ell,1),$\,\, $ -\frac{\pi}{2}<\bar \theta < 0$  }\\
	2\ell+2  &\text{if $(i_T(\Psi),\nu_T(\Psi))=(2\ell+1,\{1,2\}),$\,\, $0\leq \bar \theta <\frac{\pi}{2}$}
	\end{cases}
\end{equation}   
In the second row   of \eqref{eq:laps_count_res},  $\nu_T(\Psi) = 2$ corresponds to the double resonance case $\Psi(T)=I,$ which occurs when $\bar\theta=0.$

Properties  \ref{prop:P3} and  \ref{prop:P4} must be adapted  as follows:

\begin{enumerate}[label=\textup{(P\arabic*')}, start=3]
\item  if $i_T(\Psi)=2\ell$, 
		\begin{align*}
		-\max g=\min g=\bar \theta\in\, \left]-\frac{\pi}{2},0\right[\,,
		&&	-\pi< g +\bar \theta\leq 0
\end{align*}\label{prop:P3'}		
\item if $i_T(\Psi)=2\ell+1$
		\begin{align*}
		\max g=-\min g=\bar \theta \in \left]0,\frac{\pi}{2} \right[\,,
		&&	0\leq  g +\bar \theta<\pi
		\end{align*}\label{prop:P4'}
\end{enumerate}
In particular,  we notice that  by \ref{prop:P4'} in  the case of double resonance $g\equiv 0$  and all solutions rotate counter-clockwise $\ell+1$ times around the origin. If  $\nu_T(\Psi)=1,$ by \ref{prop:P3'}   the counter-clockwise rotations of the solutions belong to the interval $[\ell, \ell+\frac{1}{2}[\,$, where $\ell$ corresponds to the rotation of the periodic solutions, and by \ref{prop:P4'} all the counter-clockwise rotations of the solutions belong to the interval $]\ell+\frac{1}{2}, \ell+1],$   where $\ell+1$ corresponds to the rotation of the periodic solutions.  
\end{remark}

\medskip

Let us consider the map $F\colon \R\times \,]0,+\infty[\,\to \R^2$ defined as
\begin{equation}\label{eq:PTM_field}
F(\phi,r)=(F_1(\phi,r),F_2(\phi,r))=(\Theta_T(\phi),r( \RR_T(\phi)-1))=\PP_T(\phi,r)-(\phi,r)
\end{equation}
We notice that $F(\phi,r)=(0,0)$ if and only if $\PP_T(\phi,r)=(\phi,r)$, meaning that $(\phi,r)$ corresponds to the initial point of a non-rotating $T$-periodic solution of \eqref{eq:planar_linear}.

Since $F$ is $2\pi$-periodic in the angular variable, we want to introduce a suitable notion of degree for every fixed radius.

\begin{defin} Let $f\colon \R\times \,]0,+\infty[\,\to \R^2$ be a continuous function. For every $r>0$ such that $f(\phi,r)\neq 0$ for every $\phi\in \R$, let us pick any continuous function $\tilde f^r\colon \R\times [0,+\infty[\,\to \R^2$, $2\pi$-periodic in the first variable, such that $\tilde f^r(\cdot,0)\equiv 0$ and $\tilde f^r(\cdot,r)\equiv f(\cdot, r)$.
	We define the degree $\mydeg(f,r)$ as
	\begin{equation*}
	\mydeg(f,r):=\deg(\tilde f^r\circ \Pi^{-1},\BB(0,r),0)	
	\end{equation*}
where $\deg$ denotes Brouwer's topological degree and $\tilde f^r\circ \Pi^{-1}\colon \R^2\to\R^2$ is properly defined due to the periodicity of $\tilde f^r$. We observe that $\mydeg(f,r)$ does not depend on the choice of $\tilde f^r$.
\end{defin}

The following two properties are direct consequences of the properties of the topological degree.

\begin{corol}\label{corol:annulus_degree}
Let $f\colon \R\times \,]0,+\infty[\,\to \R^2$ be a continuous function, $2\pi$-periodic in the first variable. Suppose that there exist $r_2>r_1>0$ such that, for every $\phi\in \R$, we have $f(\phi,r_1)\neq 0 \neq f(\phi,r_2)$. Then
\begin{equation*}
\deg(f\circ \Pi^{-1},\BB(0,r_2)\setminus \overline{\BB(0,r_1)},0)=\mydeg(f,r_2)-\mydeg(f,r_1)
\end{equation*}	
\end{corol}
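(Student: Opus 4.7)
The plan is to exploit the additivity of the Brouwer degree after cooking up a single continuous extension that is simultaneously admissible for computing $\mydeg(f,r_1)$ and $\mydeg(f,r_2)$, while coinciding with $f$ on the closed annulus between the two circles.

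First I would construct a continuous function $\tilde g\colon \R\times[0,+\infty[\,\to \R^2$, $2\pi$-periodic in the first variable, such that $\tilde g(\cdot,0)\equiv 0$, $\tilde g(\phi,r)=f(\phi,r)$ for every $r\in[r_1,r_2]$, and $\tilde g(\phi,r)=(r/r_1)f(\phi,r_1)$ for $r\in[0,r_1]$. The precise behaviour beyond $r_2$ is irrelevant, so we may simply extend by $\tilde g(\phi,r)=f(\phi,r_2)$ for $r\geq r_2$. By the hypothesis on $f$, the composition $\tilde g\circ \Pi^{-1}$ is continuous on $\clos{\BB(0,r_2)}$ and does not vanish on the circles $\{|z|=r_1\}$ and $\{|z|=r_2\}$.

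Next I would observe that $\tilde g$ is an admissible extension both for $\mydeg(f,r_2)$ (it is zero at $r=0$ and equals $f$ at $r_2$) and, when restricted to $\R\times[0,+\infty[\,$, for $\mydeg(f,r_1)$ (via the same extension beyond $r_1$; since $\mydeg$ is independent of the choice of extension, any particular one gives the correct value). Hence
\begin{equation*}
\mydeg(f,r_2)=\deg(\tilde g\circ\Pi^{-1},\BB(0,r_2),0),\qquad \mydeg(f,r_1)=\deg(\tilde g\circ\Pi^{-1},\BB(0,r_1),0).
\end{equation*}

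Then I would invoke the additivity property of the Brouwer degree applied to the decomposition $\BB(0,r_2)=\BB(0,r_1)\cup(\BB(0,r_2)\setminus\clos{\BB(0,r_1)})$ (the zero set of $\tilde g\circ\Pi^{-1}$ avoids the separating circle $\{|z|=r_1\}$), obtaining
\begin{equation*}
\deg(\tilde g\circ\Pi^{-1},\BB(0,r_2),0)=\deg(\tilde g\circ\Pi^{-1},\BB(0,r_1),0)+\deg(\tilde g\circ\Pi^{-1},\BB(0,r_2)\setminus\clos{\BB(0,r_1)},0).
\end{equation*}
Finally, since $\tilde g\equiv f$ on $\R\times[r_1,r_2]$, the last degree equals $\deg(f\circ\Pi^{-1},\BB(0,r_2)\setminus\clos{\BB(0,r_1)},0)$, and rearranging gives the desired identity.

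No serious obstacle is expected: the only subtle point is checking that $\tilde g$ is well-defined by $\Pi^{-1}$ on the ball (which is automatic from $2\pi$-periodicity in $\phi$) and that the linear interpolation on $[0,r_1]$ preserves the well-definedness at the origin, where all angles collapse to the single point $0\in\R^2$; this is guaranteed by the factor $r/r_1$ that kills the angular dependence as $r\to 0$.
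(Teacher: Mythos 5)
Your proof is correct and is essentially the argument the paper has in mind: the corollary is stated there without proof as a ``direct consequence of the properties of the topological degree,'' and your construction of a single extension $\tilde g$ admissible for both $\mydeg(f,r_1)$ and $\mydeg(f,r_2)$, followed by additivity of the Brouwer degree over $\BB(0,r_2)=\BB(0,r_1)\cup\bigl(\BB(0,r_2)\setminus\clos{\BB(0,r_1)}\bigr)$, is exactly that consequence. The two points you flag (well-definedness of $\tilde g\circ\Pi^{-1}$ at the origin and independence of $\mydeg$ from the choice of extension) are handled correctly.
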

\begin{corol} \label{corol:homotopy}
	 Let $\Hb \colon \R\times [0,1]\to \R^2\setminus\{0\}$ and $f_a,f_b\colon \R\times \,]0,+\infty[\,\to \R^2$  be three continuous functions, all of them $2\pi$-periodic in the first variable. If $\Hb(\phi,0)=f_a(\phi,r_a)$ and $\Hb(\phi,1)=f_b(\phi,r_b)$, then $\mydeg(f_a,r_a)=\mydeg(f_b,r_b)$.
	
\end{corol}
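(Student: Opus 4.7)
The plan is to deduce the conclusion from the homotopy invariance of the Brouwer degree, by reinterpreting $\mydeg(f,r)$ as a quantity that depends only on the closed curve $\phi\mapsto f(\phi,r)$, namely as its winding number around the origin.

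First I would observe that $\deg(\tilde f^r\circ \Pi^{-1},\BB(0,r),0)$ depends only on the restriction of $\tilde f^r\circ \Pi^{-1}$ to $\partial\BB(0,r)$: any two continuous extensions to $\BB(0,r)$ sharing the same boundary values are homotopic rel boundary via the straight-line homotopy, which keeps the boundary inside $f(\cdot,r)\circ \Pi^{-1}\subset\R^2\setminus\{0\}$ and hence preserves the degree at $0$. Thus $\mydeg(f,r)$ coincides with the winding number around the origin of the $2\pi$-periodic curve $\phi\mapsto f(\phi,r)$. This both confirms the well-posedness claim implicit in the definition and reduces the statement to the homotopy invariance of winding numbers: the maps $\Hb(\cdot,t)\colon\R\to\R^2\setminus\{0\}$, $t\in[0,1]$, form a continuous family of closed curves in $\R^2\setminus\{0\}$ joining $f_a(\cdot,r_a)$ to $f_b(\cdot,r_b)$, so all of them have the same winding number and $\mydeg(f_a,r_a)=\mydeg(f_b,r_b)$.

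Alternatively, to stay closer to the style of Corollary \ref{corol:annulus_degree}, I would give a constructive proof. Assume without loss of generality $r_a\le r_b$. Pick any continuous extension $\tilde f_a^{r_a}$ as in the definition, and build an extension $\tilde f_b^{r_b}$ on $[0,r_b]$ by setting it equal to $\tilde f_a^{r_a}$ on $[0,r_a]$ and equal to $\Hb(\phi,(r-r_a)/(r_b-r_a))$ on $[r_a,r_b]$. The two pieces match at $r=r_a$ because $\Hb(\phi,0)=f_a(\phi,r_a)$, and at $r=r_b$ one recovers $f_b(\phi,r_b)$. Additivity of the Brouwer degree, or a direct invocation of Corollary \ref{corol:annulus_degree}, then splits $\mydeg(f_b,r_b)$ into the contribution over $\BB(0,r_a)$, which is precisely $\mydeg(f_a,r_a)$, and the contribution over the open annulus $\BB(0,r_b)\setminus\overline{\BB(0,r_a)}$, which vanishes since $\Hb$ never hits $0$. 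The degenerate case $r_a=r_b$ is handled by a harmless radial reparametrization, e.g.\ performing the concatenation over $[0,r_a/2]$ and $[r_a/2,r_a]$. The only potential obstacle is the bookkeeping of boundary matchings and the $r_a=r_b$ case, both routine once the winding-number interpretation is kept in mind.
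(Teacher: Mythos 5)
Your proof is correct; the paper states this corollary without proof, merely as a ``direct consequence of the properties of the topological degree'', and your first argument (dependence of the Brouwer degree only on the boundary values, identification of $\mydeg(f,r)$ with the winding number of $\phi\mapsto f(\phi,r)$, and homotopy invariance of the winding number along $\Hb$) is exactly the standard elaboration of that remark. Your alternative concatenation argument via Corollary~\ref{corol:annulus_degree} is also sound, including the matching of the pieces at $r=r_a$ and the treatment of the degenerate case $r_a=r_b$.
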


We remark that the map $F\circ \Pi^{-1}\colon \R^2\setminus \{0\}\to \R^2$ shall be considered just as an effective tool to study $F$, without looking for any special meaning as a flow on the annulus. 
Indeed, we are not conjugating $F$ with respect to $\Pi^{-1}$, but just composing it; hence our construction should not be confused with other properties, such as $\Psi(T)z=\Pi\circ \PP_T\circ \Pi^{-1}(z)$ for $z\neq 0$, which may be more familiar to the reader.

We now compute $\mydeg(F,r)$ in terms of the Maslov index associated to the corresponding linear system.

\begin{lemma} 
If $i_T(\Psi)=0$, then $\mydeg(F,r)=-2$ for every $r>0$. If $i_T(\Psi)\neq 0$, then $\mydeg(F,r)=0$ for every $r>0$.
\end{lemma}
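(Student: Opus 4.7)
The plan is to recognise $\mydeg(F,r)$ as (minus) the winding number of the curve $\phi\mapsto F(\phi,r)$ around the origin in $\R^2$, and then to compute that winding number from the explicit description of $\Theta_T$ and $\RR_T$ in Lemma~\ref{lem:poincaremap}. Taking the particular extension $\tilde F^r(\phi,s):=(s/r)F(\phi,r)$ in the definition of $\mydeg$ builds the radial cone over the boundary map $F(\,\cdot\,,r)$, and nonresonance forces $F(\phi,r)\neq 0$ for every $\phi$, so the only zero of $\tilde F^r\circ\Pi^{-1}$ inside $\BB(0,r)$ is the centre. The Brouwer degree of such a cone coincides with that of its boundary circle map into $\R^2\setminus\{0\}$, hence, tracking the fact that $\Pi$ parametrises $\partial\BB(0,r)$ clockwise as $\phi$ grows, one obtains
\begin{equation*}
\mydeg(F,r)=-W\bigl(F(\,\cdot\,,r)\bigr),
\end{equation*}
where $W$ denotes the counter-clockwise winding number around $0$ as $\phi$ runs through $[0,2\pi]$. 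Independence of $\mydeg(F,r)$ from $r$ also follows from Corollary~\ref{corol:annulus_degree}.

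For $i_T(\Psi)\neq 0$ the strategy is to show that $\Theta_T(\phi)=\bar\theta-K\pi+g(\phi+\bar\theta)$ has constant sign, which confines the image of $F(\,\cdot\,,r)$ to an open half-plane and forces $W=0$. This reduces to a case-by-case inspection of \eqref{eq:laps_count}: when $i_T(\Psi)=2\ell$ with $\ell\neq 0$, property \ref{prop:P3} yields $|\bar\theta|<\pi/2$ and $|g|<\pi/2$, so $|\bar\theta+g|<\pi\leq|2\ell\pi|$; when $i_T(\Psi)=2\ell+1$, the sharper bound $\max g<|\bar\theta|$ from \ref{prop:P4} forces $\bar\theta+g$ to keep the sign of $\bar\theta$, and the value of $K$ prescribed by \eqref{eq:laps_count} again keeps $\Theta_T$ bounded away from zero.

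For $i_T(\Psi)=0$ I would first reduce to the canonical configuration $\bar\theta=0$ by means of Corollary~\ref{corol:homotopy}: letting $\bar\theta$ slide linearly to $0$ while $\bar\tau,\bar\sigma$ are held fixed preserves the characterisation $\tanh^2\bar\tau>\sin^2\bar\theta$ of $\Gamma^-$, so the linear system remains nonresonant and $F$ never vanishes along the deformation. For the reduced, now $\pi$-periodic map $F(\phi,r)=(g(\phi),r(\RR_T(\phi)-1))$, properties \ref{prop:P1}--\ref{prop:P2} combined with the strict monotonicity of $\RR_T$ on each quarter-period $[\theta_0+k\pi/2,\theta_0+(k+1)\pi/2]$ give the following sign pattern: in each quarter-period exactly one coordinate of $F$ changes sign, and the boundary curve visits the four quadrants in counter-clockwise cyclic order along any $\pi$-interval. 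This yields $W=+1$ per $\pi$-period, hence $W=+2$ over $[0,2\pi]$, and therefore $\mydeg(F,r)=-2$.

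The main obstacles I anticipate are the orientation bookkeeping that produces the minus sign in the identity $\mydeg(F,r)=-W$, and the exhaustive sub-case analysis in \eqref{eq:laps_count} needed to certify $\sgn\Theta_T$ whenever $i_T(\Psi)\neq0$. Once the reduction to $\bar\theta=0$ is in place the count $W=2$ is essentially geometric, but properties \ref{prop:P1}--\ref{prop:P2} and the monotonicity of $\sin^2$ on each quarter-period must still be invoked to rule out spurious extra oscillations of the boundary curve.
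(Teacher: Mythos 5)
Your argument is correct, and its backbone --- identifying $\mydeg(F,r)$ with minus the winding number of the closed curve $\phi\mapsto(F_1(\phi,r),F_2(\phi,r))$, the minus sign coming from the clockwise parametrization of $\partial\BB(0,r)$ by $\Pi$, and then reading that winding number off the signs of the two components via \ref{prop:P1}--\ref{prop:P4} --- is the same as the paper's; in particular your treatment of $i_T(\Psi)\neq 0$ (constant sign of $\Theta_T$ because $\abs{\bar\theta+g}$ is dominated by $\abs{K\pi}$, or because $g$ cannot overturn the sign of $\bar\theta$ when $K=0$, hence winding number zero) coincides with the paper's. Where you genuinely diverge is the case $i_T(\Psi)=0$: the paper keeps $\bar\theta$ general and compiles a sign table over a $\pi$-period built on six distinguished angles ($\phi_1,\phi_2$ where the shifted $g$ equals $\abs{\bar\theta}$, their symmetric counterparts $\phi_3,\phi_4$, and the extremal points $\phi_M,\phi_m$ of $g$, which turn out to be exactly the zeros of $F_2$), with a case split on $\sgn\bar\theta$. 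You instead first slide $\bar\theta$ linearly to $0$; this is legitimate because \ref{prop:P3} gives $\abs{\bar\theta}<\pi/2$, so $\tanh^2\bar\tau>\sin^2\bar\theta$ is preserved along the deformation, the endpoint matrix stays in $\Gamma^-$, the homotopy never vanishes, and Corollary~\ref{corol:homotopy} applies. After this normalization $F(\phi,r)=(g(\phi),r(\RR_T(\phi)-1))$ and the quadrant cycle over each $\pi$-period follows at once from \ref{prop:P1}, \ref{prop:P2} and the strict monotonicity of $\sin^2(\phi-\theta_0)$ on quarter-periods, giving winding number $+2$ over $[0,2\pi]$ and hence $\mydeg(F,r)=-2$. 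The normalization buys a visibly simpler sign analysis (no split on $\sgn\bar\theta$, no need to locate $\phi_1,\dots,\phi_4$) at the cost of the extra nonresonance check along the deformation, which you do supply; both routes reach the same conclusion.
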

\begin{proof}
By Lemma \ref{lem:poincaremap}, we have that  for 	$i_T(\Psi)\neq 0$, the map $\Theta_T\equiv F_1(\cdot,r)$ has constant sign, since $g$ is always smaller in modulus than $\bar \theta - K\pi$. Thus $\mydeg(F,r)=0$.

Let us  consider now the case $i_T(\Psi)= 0$. 
Since the system is linear, the signs of the two components $F_1(\phi,r)$ and $F_2(\phi,r)$ do not depend on $r$ and are repeated periodically in $\phi$ with period $\pi$.  As a consequence we will restrict ourselves to the interval  $\Lambda:=[\theta_0-\bar \theta,\theta_0-\bar \theta +\pi[\,.$ Let $h(\phi):=g(\phi+\bar\theta-\theta_0).$ By  \ref{prop:P2}   and \ref{prop:P3}  there exist $\phi_1, \phi_2\in \,]\theta_0-\bar \theta, \theta_0-\bar \theta+\frac{\pi}{2}[\,$  and $\phi_M\in \,]\phi_1,\phi_2[\,$ such that $h(\phi_1)=h(\phi_2)=\abs{\bar\theta}$ and $h(\phi_M)=\max g$. By \ref{prop:P3},  the symmetric points  of  $\phi_1, \phi_2,  \phi_M$  with respect  to the midpoint of $\Lambda,$ denoted respectively by $\phi_4, \phi_3, \phi_m,$ satisfy   $h(\phi_3)=h(\phi_4)=-\abs{\bar\theta}$ and $h(\phi_m)=\min g$,  with  $\phi_m\in \,]\phi_3, \phi_4[\,.$    A computation shows that $F_2(r, \phi_M)=F_2(r, \phi_m)=0$ and that $F_2(r,\phi)>0$ on $\Lambda$  iff $\phi\in \,]\phi_M,\phi_m[\,.$ 

As to  $F_1(r, \phi),$  by the properties of $g$ it follows that  its sign depends on the sign of $\bar\theta$.
More precisely, if $\bar\theta>0$  then 
 $F_1(r, \phi_i)=0, \,i=3,4, $ and $F_1(r,\phi) >0$ iff $\phi \in \Lambda\setminus [\phi_3, \phi_4],$  whereas, if $\bar\theta<0$ then $F_1(r, \phi_i)=0, \,i=1,2, $ and $F_1(r,\phi) >0$ iff $\phi \in \,]\phi_1, \phi_2[\,$.

We summarize the sign behaviour of the components of $F$ on $\Lambda$ in the following table, where we set $\hat\theta=\bar\theta-\theta_0$.\smallskip
\begin{equation*}
\begin{array}{lccccccc}
	& [\hat\theta,\phi_1[\,& [\phi_1,\phi_M[\, &[\phi_M,\phi_2]  & \,]\phi_2,\phi_3[\,  & [\phi_3,\phi_m[\,  & [\phi_m,\phi_4]  & \,]\phi_4,\hat\theta+\pi[\,   \\[6pt]
F_1(\cdot,r) &\sgn\bar\theta&\geq 0&\geq 0&\sgn\bar\theta&\leq 0&\leq 0&\sgn\bar\theta \\[4pt]	
F_2(\cdot,r)  &<0&< 0&\geq 0&>0&>0&\leq 0&<0\\		
\end{array}
\end{equation*}\smallskip
It follows that $\mydeg(F,r)=-2$.
\end{proof}

The same line of reasoning applies to the maps $F+(2M\pi,0)$, whose zeros correspond to $T$-periodic solutions making exactly $-M$ clockwise windings.
\begin{corol} \label{corol:deg_linear} 
	Let $M\in \Z$. If $i_T(\Psi)=2 M$, then $\mydeg\bigl( F+(2M\pi,0),r\bigr)=-2$. If $i_T(\Psi)\neq 2M$, then $\mydeg \bigl(F+(2M\pi,0),r\bigr)=0$.
\end{corol}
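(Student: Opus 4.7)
My plan is to repeat the sign analysis of Lemma \ref{lem:poincaremap} with the angular shift incorporated. The radial component of $F+(2M\pi,0)$ coincides with $F_2(\phi,r)=r(\RR_T(\phi)-1)$ from that lemma, so its zero set and sign pattern on a fundamental angular interval $\Lambda$ are unchanged. Only the angular component differs, and by the formula for $\Theta_T$ in Lemma \ref{lem:poincaremap},
\begin{equation*}
\Theta_T(\phi)+2M\pi=\bar\theta-(K-2M)\pi+g(\phi+\bar\theta).
\end{equation*}
Since $|g|<\pi/2$ everywhere, the sign of this expression is governed by the constant $\bar\theta-(K-2M)\pi$.

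If $i_T(\Psi)=2M$, formula \eqref{eq:laps_count} gives $K=2M$, so the shifted angular component reduces to $\bar\theta+g(\phi+\bar\theta)$ with $\bar\theta\in\,]-\pi/2,\pi/2[\,$ and $\max g>|\bar\theta|$ by \ref{prop:P3}. This is structurally identical to the situation $i_T(\Psi)=0$ treated in Lemma \ref{lem:poincaremap}, so the sign table and the conclusion $\mydeg(F+(2M\pi,0),r)=-2$ transfer verbatim.

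If $i_T(\Psi)\neq 2M$, I would split cases according to \eqref{eq:laps_count}. For $i_T(\Psi)=2\ell$ with $\ell\neq M$ one has $|K-2M|\pi\geq 2\pi$, while $|\bar\theta|<\pi/2$ and $|g|<\pi/2$, so the shifted angular component has constant sign. For $i_T(\Psi)=2\ell+1$ either $K-2M=0$ (which occurs when $M=\ell$ with $\bar\theta<0$, or when $M=\ell+1$ with $\bar\theta>0$), and then property \ref{prop:P4} forces $\max g<|\bar\theta|$ so that $\bar\theta+g$ has the sign of $\bar\theta$; or $|K-2M|\geq 2$, in which case the constant $(K-2M)\pi$ dominates $|\bar\theta|+|g|<3\pi/2$. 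In every instance $\Theta_T+2M\pi$ has a fixed sign on $\R$, so the argument used for $i_T(\Psi)\neq 0$ in Lemma \ref{lem:poincaremap} yields $\mydeg(F+(2M\pi,0),r)=0$.

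The only mildly delicate step is the odd subcase $K-2M=0$, which requires the sharp inequality $\max g<|\bar\theta|$ from \ref{prop:P4} rather than the crude bound $|g|<\pi/2$; every other subcase is immediate from the estimates above. I do not foresee a substantive obstacle beyond this finite case analysis.
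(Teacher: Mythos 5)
Your proposal is correct and follows essentially the same route as the paper, which itself only remarks that ``the same line of reasoning applies'' to $F+(2M\pi,0)$: the shift by $2M\pi$ changes only the constant $\bar\theta-K\pi$ in the angular component, and your case analysis (using that $K-2M$ is always even, so either it vanishes or $\abs{K-2M}\pi\geq 2\pi$ dominates $\abs{\bar\theta}+\abs{g}$, with \ref{prop:P3} and \ref{prop:P4} handling the $K=2M$ subcases) correctly reduces everything to the two sign patterns already treated in the lemma. No gaps.
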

 We remind the reader that $F_1$ measures clockwise rotation, whereas Maslov's index is associated to counter-clockwise half-rotations.

\subsection{Rotational properties for asymptotically linear systems}

Analogously to the case of linear system, we can consider the lift of the flow of system \eqref{eq:HSplanar} to  polar coordinates, and the associated Poincaré map $\PP(t,\phi,r)$. Then, we define the map $\FF\colon \R^1\times \,]0,+\infty[\,\to \R^2$ as
\begin{equation} \label{eq:def_genlift}
\FF(\phi,r)=(\FF_1(\phi,r),\FF_2(\phi,r))=\PP(T,\phi,r)-(\phi,r)
\end{equation}

\begin{lemma} \label{lemma:rot_zero}
	Let us assume that the Hamiltonian system \eqref{eq:HSplanar} satisfies condition \ref{cond:h0}, and that the matrix $\A$ is $T$-nonresonant with associated Maslov index $i_0=i_T(\Psi_\A)$. Then there exists $r_0>0$ such that, for every $0<r<r_0$, we have
	\begin{align*}
	2\ell\pi < &-\FF_1(\cdot, r) <2(\ell +1)\pi &&\text{if $i_0=2\ell+1$}\\
	(2\ell-1)\pi < &-\FF_1(\cdot, r) <(2\ell +1)\pi &&\text{if $i_0=2\ell$}
	\end{align*}
	and, moreover, for every $M\in\Z$
	\begin{equation*}
\mydeg \bigl(\FF+(2M\pi,0),r\bigr)=\begin{cases}
		-2 &\text{if $i_0=2M$ }\\
		0 &\text{if $i_0\neq 2M$ }
		\end{cases}
	\end{equation*}
\end{lemma}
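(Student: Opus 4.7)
My plan is to deduce both conclusions from the fact that, for small $r$, the Poincar\'e map of \eqref{eq:HSplanar} is a small perturbation of the Poincar\'e map of the linearization $\dot z=J\A(t)z$, for which Lemma \ref{lem:poincaremap} and Corollary \ref{corol:deg_linear} already supply the rotation and degree information. First I would set up the blow-up $u(t)=z(t)/r$: substituting into \eqref{eq:HSplanar} gives $\dot u = r^{-1}JD_zH(t,ru)$, whose right-hand side converges as $r\to 0^+$ to $J\A(t)u$ uniformly on bounded sets by \ref{cond:h0}, while its Lipschitz constant in $u$ is uniform in $r$ by \ref{cond:h1}. Continuous dependence of flows then forces the Poincar\'e time-$T$ map of the rescaled system to converge uniformly on the unit circle to $\Psi_\A(T)$; translated to lifted polar coordinates this means $\FF_1(\phi,r)\to \Theta_T^\A(\phi)$ uniformly in $\phi$, where $\Theta_T^\A$ is the angular component given by Lemma \ref{lem:poincaremap} applied to $\dot z=J\A(t)z$.

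Next, for the rotation bounds I would first verify them for the linear model. Properties \ref{prop:P3} and \ref{prop:P4} translate directly into the stated strict inequalities for $-\Theta_T^\A$: the bound $\max g+|\bar\theta|<\pi$ from \ref{prop:P3} gives the even case, while the bound $\max g<|\bar\theta|$ from \ref{prop:P4}, together with the value of $K$ prescribed by \eqref{eq:laps_count} (distinguishing $\bar\theta>0$ and $\bar\theta<0$), gives the odd case. Since these are strict pointwise inequalities for a continuous $2\pi$-periodic function of $\phi$, the uniform convergence established above transfers them to $-\FF_1(\cdot,r)$ for all $r$ sufficiently small.

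For the degree I distinguish two cases. When $i_0\neq 2M$, the rotation bound just obtained implies that $\FF_1(\cdot,r)+2M\pi$ keeps constant sign, so the image of $\FF+(2M\pi,0)$ lies in an open half-plane and $\mydeg(\FF+(2M\pi,0),r)=0$. When $i_0=2M$ (forcing $i_0$ even), I would connect $\FF+(2M\pi,0)$ to $F^\A+(2M\pi,0)$ at the same radius $r$ through the family of Hamiltonians $H_s(t,z):=s^{-2}H(t,sz)$ for $s\in(0,1]$, continuously extended at $s=0$ by $H_0(t,z):=\tfrac12\scal{\A(t)z}{z}$. Each $H_s$ defines a vector field continuous in $s\in[0,1]$ with uniform Lipschitz constant in $z$, so its Poincar\'e map $\PP^s$ depends continuously on $s$, and the homotopy $\Hb(\phi,s):=\PP^s(T,\phi,r)-(\phi,r)+(2M\pi,0)$ is continuous. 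Corollary \ref{corol:homotopy} combined with Corollary \ref{corol:deg_linear} would then yield $\mydeg(\FF+(2M\pi,0),r)=-2$, provided $\Hb$ never vanishes.

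The main obstacle is precisely this nonvanishing. At $s=0$ a zero of $\Hb$ would be a nontrivial $T$-periodic solution of the linear system, ruled out by nonresonance of $\A$. For $s\in(0,1]$ a zero corresponds to a nontrivial $T$-periodic solution of \eqref{eq:HSplanar} in $\BB(0,sr)\subseteq\BB(0,r)$; for $r$ small enough this is excluded by the standard blow-up argument, in which a hypothetical sequence of such orbits shrinking to $0$, renormalized by its sup-norm, admits via \ref{cond:h1} and Ascoli-Arzel\`a a subsequence converging to a nontrivial $T$-periodic solution of $\dot z=J\A(t)z$, again contradicting nonresonance.
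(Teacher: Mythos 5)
Your proposal is correct and follows essentially the same route as the paper: both establish the uniform convergence of the (radially rescaled) lifted Poincaré map to that of the linearization at the origin, read off the rotation bounds from Lemma \ref{lem:poincaremap}, and compute the degree by an admissible homotopy shrinking the system onto its linearization, concluding via Corollaries \ref{corol:homotopy} and \ref{corol:deg_linear}. The only differences are cosmetic — you phrase the homotopy through the rescaled Hamiltonians $H_s(t,z)=s^{-2}H(t,sz)$ and justify its admissibility by a blow-up contradiction (the paper reads nonvanishing directly off the uniform limits), and you dispatch the case $i_0\neq 2M$ by a constant-sign half-plane argument instead of the same homotopy.
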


\begin{lemma} \label{lemma:rot_infty}
	Let us assume that the Hamiltonian system \eqref{eq:HSplanar} satisfies condition \ref{cond:hinfty}, and that the matrix $\B$ is $T$-nonresonant with associated Maslov index $i_\infty=i_T(\Psi_\B)$. Then there exists $r_\infty>0$ such that, for every $r>r_\infty$, we have
	\begin{align*}
	2\ell\pi < &-\FF_1(\cdot, r) <2(\ell +1)\pi &&\text{if $i_\infty=2\ell+1$}\\
	(2\ell-1)\pi < &-\FF_1(\cdot, r) <(2\ell +1)\pi &&\text{if $i_\infty=2\ell$}
	\end{align*}
	and, moreover, for every $M\in\Z$
	\begin{equation*}
\mydeg \bigl(\FF+(2M\pi,0),r\bigr)=\begin{cases}
-2 &\text{if $i_\infty=2M$ }\\
0 &\text{if $i_\infty\neq 2M$}
\end{cases}
\end{equation*}
\end{lemma}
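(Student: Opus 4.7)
The plan is to show that, for $r$ large, the nonlinear map $\FF$ is uniformly close to the Poincaré map $F$ associated to the linearization at infinity $\dot z = J\B(t)z$, and then to read off both the rotation estimates and the degree from the corresponding statements for the linear system, namely Lemma~\ref{lem:poincaremap} and Corollary~\ref{corol:deg_linear}. To quantify the closeness, I would write $D_zH(t,z) = \B(t)z + R(t,z)$ with $\norm{R(t,z)}=o(\norm{z})$ uniformly in $t$ by \ref{cond:hinfty}, and apply Gronwall's inequality to the difference $z(t) - \Psi_\B(t)z(0)$. This yields, for every $\epsilon>0$, a radius $\rho_\epsilon$ such that $\norm{z(0)} \ge \rho_\epsilon$ implies $\sup_{t\in[0,T]}\norm{z(t)-\Psi_\B(t)z(0)} \le \epsilon \norm{z(0)}$. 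Since $\Psi_\B$ is bounded and invertible on $[0,T]$, the norm $\norm{z(t)}$ stays comparable to $\norm{z(0)}$, so the polar lift remains well defined on $[0,T]$.

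Translating this estimate into polar coordinates gives
\[
\sup_{\phi\in\R} \abs{\FF_1(\phi,r) - F_1(\phi,r)} + \frac{1}{r}\sup_{\phi\in\R} \abs{\FF_2(\phi,r) - F_2(\phi,r)} \longrightarrow 0
\]
as $r\to\infty$, where $F$ is the map \eqref{eq:PTM_field} associated to $\dot z = J\B(t)z$. The rotation estimates for $-\FF_1$ then follow by observing that, by Lemma~\ref{lem:poincaremap}, the corresponding bounds for $-F_1$ are \emph{strict}: there is a fixed margin between $-F_1(\phi)$ and the endpoints $2\ell\pi$, $2(\ell+1)\pi$ in the odd case, or $(2\ell-1)\pi$, $(2\ell+1)\pi$ in the even case, guaranteed by \ref{prop:P3}--\ref{prop:P4}. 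Choosing $r_\infty$ large enough that the $o(1)$ angular error is smaller than this margin yields the claimed strict inequalities for all $r>r_\infty$.

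For the degree identity, I would use the straight-line homotopy $\Hb(\phi,\lambda) = (1-\lambda) F(\phi,r) + \lambda\FF(\phi,r) + (2M\pi,0)$ on a fixed circle of radius $r$. To invoke Corollary~\ref{corol:homotopy} it suffices to check that $\Hb$ does not vanish for any $\lambda\in[0,1]$ and $\phi\in\R$. Since $F+(2M\pi,0)$ is nowhere zero by $T$-nonresonance of $\B$, a compactness argument on one angular period produces $\delta>0$ such that $\abs{F_1(\phi)+2M\pi} \ge \delta$ whenever $\abs{\RR_T(\phi)-1}\le \delta$. Combining this dichotomy with the two estimates above excludes zeros of $\Hb$ for $r$ large enough, and the conclusion then follows from Corollary~\ref{corol:deg_linear} applied to the linearization at infinity.

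The main obstacle is the simultaneous control of both components needed to exclude zeros along the homotopy: the radial component scales as $r$ but may vanish at specific angles where $\RR_T=1$, and at those angles the angular estimate must take over. Making this dichotomy quantitative, in particular extracting a single $\delta$ independent of $r$, is the delicate step, and it ultimately rests on the strict inequalities in \ref{prop:P3}--\ref{prop:P4} of Lemma~\ref{lem:poincaremap} — precisely what nonresonance of $\B$ provides. The proof of Lemma~\ref{lemma:rot_zero} should follow along the same lines, with a local Lipschitz estimate from \ref{cond:h0} replacing the Gronwall-at-infinity step.
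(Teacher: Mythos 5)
Your proposal is correct and follows essentially the same route as the paper: uniform convergence of $\FF_1(\phi,r)$ and $\FF_2(\phi,r)/r$ to the linearized quantities as $r\to\infty$ (obtained via Gronwall from \ref{cond:hinfty}), the rotation bounds read off from the strict margins in Lemma~\ref{lem:poincaremap}, and the degree transferred from Corollary~\ref{corol:deg_linear} by a non-vanishing homotopy justified through Corollary~\ref{corol:homotopy}. The only cosmetic difference is the homotopy itself --- you interpolate linearly between $\FF$ and the linear field at a fixed large radius, while the paper sweeps through the radii with a rescaled radial component --- but both hinge on exactly the same dichotomy between the angular and radial components that you identify.
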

\begin{proof}[Proofs of Lemmata \ref{lemma:rot_zero} and \ref{lemma:rot_infty}]

The proofs of these results are quite standard, cf.~for instance \cite[Lemmata 1 and 2]{MRZ}.  By uniform estimates on the nonlinearities at zero and infinity, we deduce, by \ref{cond:h0} and \ref{cond:hinfty}, that the following properties hold uniformly on $\phi$:
\begin{align*}
	\lim_{r \to 0} \FF_1(\phi,r)=F^\A_1(\phi,1) && \lim_{r \to 0} \frac{\FF_2(\phi,r)}{r}=F^\A_2(\phi,1) \\
	\lim_{r \to +\infty} \FF_1(\phi,r)=F^\B_1(\phi,1) && \lim_{r \to +\infty} \frac{\FF_2(\phi,r)}{r}=F^\B_2(\phi,1) 
\end{align*}
where $F^\A$ and $F^\B$ are, respectively,   the maps \eqref{eq:PTM_field} for the linearizations   at zero and infinity. The rotational properties of $\FF_1$ follow by the two limits above on the left and by Lemma \ref{lem:poincaremap}. 

To prove the degree property at zero, let us first notice that the limits above assure us the existence of $r_0$ such that $\FF(\phi,r)\neq 0$ for every $\phi\in \R$, $0<r<r_0$. Then, for every $0<r<r_0$, we consider the homotopy $\HM\colon \mathbb{S}^1\times [0,1]\to \R^2\setminus\{0\} $ defined as
\begin{equation*}
\HM(\phi,\lambda)=\begin{cases}
F^\A(\phi,1)+(2M\pi,0)&\text{for $\lambda=0$}\\
\left(\FF_1(\phi, \lambda r)+2M\pi,\frac{\FF_2(\phi, \lambda r)}{\lambda r} \right) &\text{for $0<\lambda\leq\frac{1}{2}$}\\
\left(\FF_1(\phi, \lambda r)+2M\pi,\frac{\FF_2(\phi, \lambda r)}{1+(1-\lambda)(r-2)} \right) &\text{for $\frac{1}{2}<\lambda\leq 1$}
\end{cases}	
\end{equation*}
that is continuous because of the limits above. The desired property is a consequence of Corollaries \ref{corol:homotopy} and \ref{corol:deg_linear}.

The proof of the degree property at infinity follows the same line, using a suitable rescaling of the radial coordinate.
\end{proof}


\section{Main result}

We observe that the change of coordinates associated to the map $\Pi$ is not symplectic. However we can adjust this situation by a simple rescaling of the radial coordinate (cf.~\cite{R}), namely by considering $\widehat\Pi \colon \R\times \R^+\to \R^2\setminus \{0\}$ defined as
\begin{equation*}
\widehat{\Pi}(\phi, \hat r)=\bigl(\sqrt{2\hat r}\cos \phi, -\sqrt{2\hat r}\sin \phi \bigr)	
\end{equation*}
Let us therefore define the maps $\widehat{\PP}_T$ and  $\widehat{\FF}$ analogously to $\PP_T$ and $\FF$, but with respect to the projection $\widehat{\Pi}$. We observe that $\widehat{\PP}_T$ is area preserving and satisfies
\begin{equation*}
\PP_T(\phi_a,r_a)=(\phi_b,r_b) \quad\text{if and only if} \quad \widehat{\PP}_T(\phi_a,r_a^2/2)=(\phi_b,r_b^2/2)
\end{equation*}
From this, we deduce straightforwardly the following properties
\begin{enumerate}[label=\textup{\roman*)}]
	\item $(\phi_0,r_0)$ is a fixed point for $\PP_T$ with fixed point index $j$ if and only if $(\phi_0,r_0^2/2)$ is a fixed point for $\widehat{\PP}_T$ with the same fixed point index $j$.
	\item For every $(\phi,r)\in \R\times\R^+$, we have $\FF_1(\phi,r)=\widehat{\FF}_1(\phi,r^2/2)$. \label{item:rot}
	\item For every $r>0$, $M\in \Z$, we have $$\mydeg \bigl(\FF+(2M\pi,0),r\bigr)=\mydeg \bigl(\widehat\FF+(2M\pi,0),r^2/2\bigr)$$ \label{item:degree}
\end{enumerate}

Taking into account \ref{item:rot}, we are now ready to state a main corollary of the classical Poincaré--Birkhoff Theorem for planar Hamiltonian systems (cf.~\cite{FU,R}).

\begin{theorem} \label{th_PBhamiltoniano}
	Let us consider the planar Hamiltonian system \eqref{eq:HSplanar} satisfying \ref{cond:h1}. We define the map $\FF$ as in \eqref{eq:def_genlift} and assume that there exist $r_1,r_2>0$ and $M\in \Z$ such that, for every $\phi\in[0,2\pi[\,$
\begin{equation}\label{eq:twistcond}
	\FF_1(\phi,r_1)<2\pi M<\FF_1(\phi,r_2)
\end{equation}
Then the system \eqref{eq:HSplanar} has at least two $T$-periodic solutions, making exactly $M$ clockwise rotations around the origin, and satisfying $r_1<\abs{z(0)}<r_2$ (or $r_2<\abs{z(0)}<r_1$).
\end{theorem}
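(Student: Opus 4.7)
The plan is to reduce the statement to the Poincaré--Birkhoff theorem applied to the symplectic Poincaré map $\widehat{\PP}_T$, since this map (unlike $\PP_T$) is area-preserving, and then to read off the rotation count from the lift. Without loss of generality I would treat the case $r_1<r_2$ (the other case is identical after relabelling). Setting $s_i := r_i^2/2$, property \ref{item:rot} yields $\widehat{\FF}_1(\phi,s_i)=\FF_1(\phi,r_i)$, so the hypothesis \eqref{eq:twistcond} becomes the twist condition
\begin{equation*}
\widehat{\FF}_1(\phi,s_1)<2\pi M<\widehat{\FF}_1(\phi,s_2)\qquad\text{for every }\phi\in\R.
\end{equation*}
Consider on $\R\times\R^+$ the translated map $\widehat{\PP}^{(M)}(\phi,s):=\widehat{\PP}_T(\phi,s)-(2\pi M,0)$. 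It is a lift of an area-preserving homeomorphism of the topological annulus $\mathbb{R}/2\pi\mathbb{Z}\times\R^+$, and the inequality above says that on the inner boundary $s=s_1$ the first component of $\widehat{\PP}^{(M)}-\mathrm{Id}$ is strictly negative, while on $s=s_2$ it is strictly positive. This is precisely the boundary twist condition in the Hamiltonian version of the Poincaré--Birkhoff theorem.

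Next I would invoke the form of the Poincaré--Birkhoff theorem adapted to planar Hamiltonian flows, as in \cite{FU,R}: this version does not require the boundary circles to be invariant under the map, but only the continuability of solutions starting in the initial annulus $\{s_1\leq s\leq s_2\}$ for $t\in[0,T]$ together with the boundary twist just verified. Continuability on $[0,T]$ for initial data in this annulus is implicit in the hypothesis, since $\FF_1(\phi,r_i)$ is assumed to be well-defined (so the flow from both boundaries returns after time $T$), and it can be guaranteed on the whole closed annulus by combining \ref{cond:h1} with a standard energy-style argument or by the very construction of the cited theorems. The conclusion yields at least two geometrically distinct fixed points $(\phi_1^*,s_1^*),(\phi_2^*,s_2^*)$ of $\widehat{\PP}^{(M)}$ in the open strip $\R\times\,]s_1,s_2[\,$ whose projections to the annulus are distinct.

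Finally, translating back, each fixed point satisfies $\widehat{\PP}_T(\phi_k^*,s_k^*)=(\phi_k^*+2\pi M,s_k^*)$, so via $\widehat{\Pi}$ it corresponds to an initial condition $z_k(0)$ with $r_1<\abs{z_k(0)}=\sqrt{2 s_k^*}<r_2$ whose solution $z_k(\cdot)$ is $T$-periodic and, in clockwise polar coordinates, satisfies $\phi(T)-\phi(0)=2\pi M$. Because $\widehat{\Pi}$ is orientation-preserving and defines clockwise angles, this means exactly $M$ clockwise windings around the origin. The only delicate point in the argument is identifying and citing the correct version of the Poincaré--Birkhoff theorem: the classical statement demands invariant boundary circles, which we do not have, so one must use one of the Hamiltonian formulations that replaces this requirement with the twist inequalities on the two radii and the area-preserving character of $\widehat{\PP}_T$; once this is in place, the rest is bookkeeping between $\PP_T$ and $\widehat{\PP}_T$ via property \ref{item:rot}.
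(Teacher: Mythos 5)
Your proposal is correct and follows essentially the same route the paper takes: the paper states this result as a corollary of the Hamiltonian versions of the Poincaré--Birkhoff theorem in \cite{FU,R}, applied to the area-preserving map $\widehat{\PP}_T$ obtained via the symplectic rescaling $\widehat{\Pi}$, with the twist condition and the rotation count transferred through property \ref{item:rot}. Your only extra concern, continuability on $[0,T]$, is in fact automatic here since \ref{cond:h1} gives a uniform Lipschitz bound on $D_zH(t,\cdot)$ and hence global existence.
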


We also recall the following result for the fixed point index of area-preserving maps (cf.~\cite[Prop~1]{Sim}).

\begin{prop}\label{prop:simon}
	Let $u$ be an isolated stationary point of an area preserving flow $f\colon U(\subseteq \R^2)\to \R^2$. Then the fixed point index of $f$ in $u$ is less than or equal to $+1$.
\end{prop}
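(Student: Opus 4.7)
The plan is to argue by contradiction, assuming $i(f,u) = k \geq 2$, and to derive a violation of area preservation.

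\textbf{Step 1 (Local setup).} I would shrink $U$ to a closed topological disk $\overline{D}$ centered at $u$ small enough that $f$ has no other fixed points in $\overline{D}$. The displacement $v(x) = f(x) - x$ is then a continuous map $\overline{D} \setminus \{u\} \to \R^2 \setminus \{0\}$, and by the very definition of the fixed point index, its winding number about the origin along any positively oriented Jordan curve enclosing $u$ inside $\overline{D}$ equals $k \geq 2$.

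\textbf{Step 2 (Construction of an invariant disk).} The core claim is that the hypothesis $k \geq 2$ forces the existence of a Jordan curve $C \subset D$ enclosing $u$ such that, replacing $f$ by $f^{-1}$ if necessary, $f(C) \subset \mathrm{Int}(C)$. Heuristically, an index strictly greater than $1$ means that $v$ rotates along small circles faster than the circles themselves rotate, which should produce at least one closed curve along which $v$ is uniformly transverse and \emph{inward-pointing}, together with a small enough size so that $f(x)$, not merely $v(x)$, falls in $\mathrm{Int}(C)$ for every $x \in C$.

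\textbf{Step 3 (Area contradiction).} Once such a $C$ is available, $f(\overline{\mathrm{Int}(C)})$ is a closed topological disk properly contained in $\mathrm{Int}(C)$, because $f$ is a homeomorphism and $f(C)$ lies in $\mathrm{Int}(C)$. Hence the difference $\overline{\mathrm{Int}(C)} \setminus f(\overline{\mathrm{Int}(C)})$ contains a nonempty open annular neighborhood of $C$ in $\overline{\mathrm{Int}(C)}$, and therefore has strictly positive planar Lebesgue measure. This contradicts $\meas(f(X)) = \meas(X)$ for $X = \overline{\mathrm{Int}(C)}$, closing the argument.

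\textbf{Main obstacle.} By far the delicate step is Step 2: producing an ``invariant'' Jordan curve $C$ from purely index-theoretic data. Winding number information only guarantees the existence of arcs on which $v$ points in a prescribed transverse direction; turning this into a closed curve along which the full map $f$ (and not just the linearization $v$) displaces every point into the enclosed region is the real content of the theorem. The standard route, which I would follow, is through the theory of \emph{Brouwer translation arcs} applied to the orientation-preserving fixed-point-free restriction of $f$ to the punctured disk $\overline{D}\setminus\{u\}$: one produces sufficiently small free arcs transverse to $v$ and glues finitely many of them, using the index $\geq 2$ to ensure that the resulting closed curve is genuinely mapped into its interior. Orientation preservation (automatic here because $f$ is area-preserving in dimension two) is essential throughout the gluing.
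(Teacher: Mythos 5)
The paper does not actually prove this proposition: it is quoted directly from C.P.~Simon \cite{Sim}, so your attempt has to be measured against the proofs in the literature. Your Steps 1 and 3 are sound, and the overall mechanism --- contradicting $\meas(f(X))=\meas(X)$ by producing a region mapped strictly inside itself --- is indeed how area preservation enters every known proof. The genuine gap is Step 2, and it is not a missing technicality: the claim as you formulate it is false. An isolated fixed point of index $k\geq 2$ of an orientation-preserving planar embedding does \emph{not} in general admit a Jordan curve $C$ around it with $f(C)\subset \mathrm{Int}(C)$, even after replacing $f$ by $f^{-1}$. Take $f(z)=z+z^2$ near $z=0$: the index is $2$, but the local dynamics is parabolic, with an attracting petal along the negative real axis and a repelling one along the positive real axis. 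If $x^*$ is the largest point of $C\cap\,]0,+\infty[\,$, then $f(x^*)=x^*+(x^*)^2$ lies on the ray $\,]x^*,+\infty[\,$, which is contained in the unbounded complementary component of $C$, so $f(C)\not\subset\mathrm{Int}(C)$; the symmetric argument on the negative axis rules out $f^{-1}$. Hence index data alone cannot produce your curve; and once area preservation is added, no such curve exists at all --- your own Step 3 shows its existence is contradictory --- so ``proving Step 2 under the hypotheses'' is not a reduction but a restatement of the whole theorem. The sentence about gluing finitely many free arcs into a curve mapped into its interior is exactly where the entire content of Simon's theorem has been hidden, and that construction cannot succeed in the form you describe.

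What index $\geq 2$ actually forces is a different, weaker object: a \emph{petal}, that is, a closed topological disk $P$ with $u\in\partial P$ and $f(P)\subset \mathrm{Int}(P)\cup\{u\}$ --- precisely what $z+z^2$ possesses. A petal already contradicts area preservation, since $P\setminus f(P)$ contains a nonempty open set while $\meas(f(P))=\meas(P)$; this is the modern route, via Le Calvez's petal theorem for fixed points of index $>1$, while Simon's original argument in \cite{Sim} also runs through Brouwer translation-arc theory but builds its trapping region with the fixed point on the boundary rather than in the interior, which is the decisive difference from your Step 2. Two smaller inaccuracies: area preservation does not imply orientation preservation in the plane (a reflection preserves area), although this case is harmless because an isolated fixed point of an orientation-reversing homeomorphism has index in $\{-1,0,1\}$; and for an actual \emph{flow}, as the statement literally reads, the result is elementary (the vector field is divergence-free, hence locally Hamiltonian, so there are no elliptic sectors and Bendixson's formula gives index $\leq 1$) --- it is the map version, applied in the paper to the area-preserving Poincar\'e map $\widehat{\PP}_T$, that genuinely requires Simon's theorem.
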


By \ref{item:degree}, we have the following corollary.

\begin{corol} \label{corol:degree_lift}
	Let $(\phi_0,r_0)$  be an isolated zero of $\FF+(2M\pi,0)$. Then the fixed-point index of $\FF+(2M\pi,0)$ in  	$(\phi_0,r_0)$ is  less than or equal to $+1$.
\end{corol}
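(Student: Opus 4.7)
The plan is to reduce the computation of the local degree of $\FF+(2M\pi,0)$ at $(\phi_0,r_0)$ to Simon's area-preserving fixed point index bound in Proposition~\ref{prop:simon}, via the symplectic rescaling $\widehat\Pi$.

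First I would reinterpret the zero as a fixed point. Introduce the translated Poincaré map on the covering $\R\times\R^+$,
\begin{equation*}
\PP_T^{(M)}(\phi,r):=\PP(T,\phi,r)+(2M\pi,0),
\end{equation*}
so that $(\phi_0,r_0)$ is an isolated zero of $\FF+(2M\pi,0)$ if and only if it is an isolated fixed point of $\PP_T^{(M)}$. By the definition of the local Brouwer degree and of the fixed point index of a map at an isolated fixed point, the fixed-point index of $\PP_T^{(M)}$ at $(\phi_0,r_0)$ coincides with the local degree of $\FF+(2M\pi,0)=\PP_T^{(M)}-\mathrm{Id}$ at that point; this is the quantity we want to bound.

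Next I would transfer the setting to the symplectic coordinates. Define the analogous translated map in the $\widehat\Pi$-coordinates,
\begin{equation*}
\widehat\PP_T^{(M)}(\phi,\hat r):=\widehat\PP_T(\phi,\hat r)+(2M\pi,0).
\end{equation*}
The change of coordinates $(\phi,r)\mapsto(\phi,r^2/2)$ is a (local) diffeomorphism on $\R\times\R^+$ that conjugates $\PP_T^{(M)}$ to $\widehat\PP_T^{(M)}$, by exactly the argument giving items \ref{item:rot}--\ref{item:degree} in the excerpt (the translation by $(2M\pi,0)$ is purely angular and therefore commutes with the radial rescaling). Consequently $(\phi_0,r_0^2/2)$ is an isolated fixed point of $\widehat\PP_T^{(M)}$ with the same fixed-point index as $(\phi_0,r_0)$ has for $\PP_T^{(M)}$.

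Finally I would invoke Proposition~\ref{prop:simon}. The map $\widehat\PP_T$ is area-preserving (as noted just before Theorem~\ref{th_PBhamiltoniano}), and the translation by $(2M\pi,0)$ is also area-preserving, so $\widehat\PP_T^{(M)}$ is an area-preserving homeomorphism on $\R\times\R^+$. Applying Proposition~\ref{prop:simon} to the isolated fixed point $(\phi_0,r_0^2/2)$ of $\widehat\PP_T^{(M)}$ yields an index at most $+1$, which by the conjugation above is precisely the fixed-point index of $\FF+(2M\pi,0)$ at $(\phi_0,r_0)$. The only subtlety I anticipate is the clean passage from the global statement in \ref{item:degree} to the required local identity of indices under the diffeomorphism $(\phi,r)\mapsto(\phi,r^2/2)$; this is, however, a direct consequence of the naturality of the local Brouwer degree under orientation-preserving diffeomorphisms.
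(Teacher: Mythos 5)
Your proposal is correct and follows essentially the same route as the paper, which justifies the corollary in one line by the correspondence of fixed points and their indices under the area-preserving rescaling $\widehat\Pi$ together with Proposition~\ref{prop:simon}. You merely make explicit the (harmless) details the paper leaves implicit: that the angular translation by $(2M\pi,0)$ is area-preserving and commutes with $(\phi,r)\mapsto(\phi,r^2/2)$, and that the local index is invariant under this orientation-preserving change of coordinates.
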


Combining these two results with the rotational properties discussed in the previous section, we have the following result.

\begin{theorem} \label{th:main}
	Let us consider the Hamiltonian system \eqref{eq:HSplanar}  and assume that \ref{cond:h1}, \ref{cond:h0}, \ref{cond:hinfty} are satisfied. Suppose that the linear systems at zero and infinity are $T$-nonresonant  and denote respectively with $i_0$ and  $i_\infty$ their $T$-Maslov indices. Then system \eqref{eq:HSplanar} has  at least $\abs{i_\infty -i_0}$ $T$-periodic solutions. Moreover, if $i_0$ is even, with $i_0\neq i_\infty$, then the number of solutions is at least $\abs{i_\infty -i_0}+1$.
\end{theorem}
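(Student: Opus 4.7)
Plan: I may assume $i_0<i_\infty$ without loss of generality, as the reverse case follows by an entirely symmetric argument. The strategy combines the Poincar\'e--Birkhoff Theorem (Theorem~\ref{th_PBhamiltoniano}) with the fixed-point-index bound supplied by Corollary~\ref{corol:degree_lift} and the annular-degree identity of Corollary~\ref{corol:annulus_degree}, fed by the rotational and degree-theoretic information at zero and infinity extracted in Lemmata~\ref{lemma:rot_zero}--\ref{lemma:rot_infty}.

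First I would convert the rotation windows for $\FF_1(\cdot,r)$ at small $r$ and $\FF_1(\cdot,R)$ at large $R$ into an explicit range of integers $N$ (counter-clockwise rotations) for which the strict twist $\FF_1(\phi,R)<-2\pi N<\FF_1(\phi,r)$ holds; each such $N$ activates Theorem~\ref{th_PBhamiltoniano} and yields two $T$-periodic solutions winding exactly $-N$ times clockwise. Writing $\ell_0=\lfloor i_0/2\rfloor$ and $\ell_\infty=\lfloor i_\infty/2\rfloor$, a direct bookkeeping against the bounds of Lemmata~\ref{lemma:rot_zero}--\ref{lemma:rot_infty} shows that an odd endpoint index allows the corresponding extreme value of $N$ to satisfy the strict twist, while an even endpoint index forces the loss of that value; the admissible set is therefore $\{\ell_0+1,\dots,\ell_\infty\}$ with the endpoint $\ell_0+1$ (respectively $\ell_\infty$) kept only if $i_0$ (respectively $i_\infty$) is odd. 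Pairing these Poincar\'e--Birkhoff solutions yields exactly $i_\infty-i_0$ solutions when both indices are odd, with a deficit of one or two when one or both endpoints are even.

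To close the deficit I would recycle the degree statements of the same lemmata. We have $\mydeg(\FF+(2\pi M,0),r)=-2$ at small $r$ precisely when $i_0=2M$, and $\mydeg(\FF+(2\pi M,0),R)=-2$ at large $R$ precisely when $i_\infty=2M$; since $i_0\ne i_\infty$, at most one end is nonzero for each $M$. Corollary~\ref{corol:annulus_degree} then gives total Brouwer degree $+2$ on the annulus $r<|z|<R$ for $M=i_0/2$ when $i_0$ is even, and $-2$ for $M=i_\infty/2$ when $i_\infty$ is even. This already forces at least one further $T$-periodic solution at each such $M$. Moreover, in the first case the degree is $+2$: by Corollary~\ref{corol:degree_lift} no isolated zero of $\FF+(2\pi M,0)$ can have fixed-point index larger than $+1$, so a single zero cannot account for the full degree and \emph{at least two} zeros must exist (or else a continuum of zeros, which is even better). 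Hence an even $i_0$ delivers two extra solutions and an even $i_\infty$ delivers one.

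Distinctness is automatic, since the PB solutions and the degree-produced solutions all have pairwise different rotation numbers $N\in\{\ell_0,\ell_0+1,\dots,\ell_\infty\}$. Summing the contributions over the four parity cases produces $|i_\infty-i_0|$ solutions in general and $|i_\infty-i_0|+1$ whenever $i_0$ is even with $i_0\ne i_\infty$. The genuinely delicate step---and the new ingredient with respect to \cite{MRZ}---is the use of Corollary~\ref{corol:degree_lift} to promote the annular degree $+2$ into two separate solutions; the sign asymmetry between annular degree $+2$ (arising at the origin) and $-2$ (arising at infinity) is exactly what makes the extra unit in the count appear only on the side of the origin, i.e.~only when $i_0$ is even.
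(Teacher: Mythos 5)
Your overall strategy is exactly the paper's: Poincar\'e--Birkhoff for the interior rotation numbers, annular degree $-2$ (one extra solution) when $i_\infty$ is even, and annular degree $+2$ combined with the area-preserving index bound of Corollary \ref{corol:degree_lift} (two extra solutions) when $i_0$ is even. The degree part is correct, including the crucial sign asymmetry between the two ends. However, there is a concrete error in your bookkeeping of the admissible twist integers, and it breaks the final count precisely in the cases where you need the bonus solution. You claim that an even endpoint index \emph{forces the loss} of the corresponding extreme value of $N$ at both ends. This is true at infinity but false at the origin. By Lemma \ref{lemma:rot_zero}, for $i_0=2\ell_0$ the counter-clockwise rotation at small $r$ lies (in full turns) in the open interval $\,]\ell_0-\tfrac12,\ell_0+\tfrac12[\,$, whose supremum $\ell_0+\tfrac12$ is still strictly below the integer $\ell_0+1$; hence $N=\ell_0+1$ satisfies the strict twist exactly as in the odd case $i_0=2\ell_0+1$, where the rotation lies in $\,]\ell_0,\ell_0+1[\,$. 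The loss occurs only at infinity, because there the relevant quantity is the infimum, which for $i_\infty=2\ell_\infty$ drops to $\ell_\infty-\tfrac12<\ell_\infty$. The correct admissible set is therefore $\{\ell_0+1,\dots,\ell_\infty\}$ when $i_\infty$ is odd and $\{\ell_0+1,\dots,\ell_\infty-1\}$ when $i_\infty$ is even, \emph{independently of the parity of} $i_0$; this is the paper's range $\lmin\leq -M\leq\lmax$.

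With your smaller set the arithmetic does not close. For instance, for $i_0=0$, $i_\infty=3$ your set is empty, Poincar\'e--Birkhoff gives nothing, the degree argument at $M=0$ gives two solutions, and you end with $2$ instead of the required $\abs{i_\infty-i_0}+1=4$; the missing pair is the one with rotation number $\ell_0+1=1$, which the (correct) twist condition does provide. In general your stated set produces a deficit of $3$ (not $1$) when $i_0$ is even and $i_\infty$ odd, and of $4$ (not $2$) when both are even, so your concluding sentence about summing the contributions does not follow from your stated intermediate claim. Once the admissible set is corrected, everything else you wrote is sound and the proof coincides with the paper's. A minor additional point: to apply Corollary \ref{corol:degree_lift} to a putative single zero of index $2$ on the annulus, you should note, as the paper does, that $\Pi$ is an orientation-preserving local diffeomorphism, so isolated zeros of $[\FF+(2M\pi,0)]\circ\Pi^{-1}$ on the annulus and of $\FF+(2M\pi,0)$ on the strip have the same fixed-point index.
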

\begin{proof}
	Let us assume, without loss of generality, that $i_0<i_\infty$, The case $i_0>i_\infty$ can be studied analogously, whereas the case $i_0=i_\infty$ is trivial.
	Let $\lmin, \lmax\in \Z$ be the two integers satisfying
	\begin{equation}
		2(\lmin-1)\leq i_0< 2\lmin \leq 2\lmax < i_\infty \leq 2(\lmax+1)
	\end{equation}
	We are going to prove the following
	\begin{enumerate}[label=\textup{(\alph*)}]
		\item For every $M\in \Z$ such that $\lmin \leq -M \leq \lmax$, system \eqref{eq:HSplanar} has at least two $T$-periodic solutions making exactly $M$ clockwise windings around the origin. \label{step:mainth1}
		\item If $i_\infty$ is even, namely $i_\infty=2(\lmax+1)$, then  system \eqref{eq:HSplanar} has at least one $T$-periodic solution making exactly $-(\lmax +1)$ clockwise windings around the origin. \label{step:mainth2}
		\item If $i_0$ is even, namely $i_0=2(\lmin-1)$, then  system \eqref{eq:HSplanar} has at least two $T$-periodic solutions making exactly $1-\lmin $ clockwise windings around the origin. \label{step:mainth3}
	\end{enumerate}

To prove \ref{step:mainth1}, let us notice that by Lemmata  \ref{lemma:rot_zero} and \ref{lemma:rot_infty} we obtain that 
condition \eqref{eq:twistcond} is satisfied for every $M$ such that $\lmin \leq M \leq \lmax$. Hence \ref{step:mainth1} follows directly from Theorem \ref{th_PBhamiltoniano}.

To prove \ref{step:mainth2} and \ref{step:mainth3}, we recall that  $T$-periodic solutions of \eqref{eq:HSplanar} making exactly $M$ clockwise rotations correspond to the zeros of the map $[\FF -(2M\pi,0)]\circ \Pi^{-1}\colon \R^2\setminus\{0\}\to \R^2$.
By Lemmata \ref{lemma:rot_zero} and \ref{lemma:rot_infty}, we deduce that
	\begin{equation}\label{eq:deg_estimate1}
\mydeg \bigl(\FF+(2(\lmax+1)\pi,0),r\bigr)=\begin{cases}
0 &\text{for $0<r<r_0$}\\
-2 &\text{for $r>r_\infty$ }
\end{cases}
\end{equation}
Applying Corollary \ref{corol:annulus_degree} to $f=\FF+(2(\lmax+1)\pi,0)$ we obtain
\begin{equation*}
\deg\bigl([\FF+(2(\lmax+1)\pi,0)]\circ \Pi^{-1},\BB(0,r_\infty+1)\setminus \overline{\BB(0,r_0/2)},0\bigr)=-2
\end{equation*}	
Then, by the properties of topological degree, there exists at least a zero for $[\FF+(2(\lmax+1)\pi,0)]\circ \Pi^{-1}$ in $\BB(0,r_\infty+1)\setminus \overline{\BB(0,r_0/2)}$, proving \ref{step:mainth2}.

To prove \ref{step:mainth3}, we proceed analogously, noticing that 
	\begin{equation} \label{eq:deg_estimate2}
	\mydeg \bigl(\FF+(2(\lmin-1)\pi,0),r\bigr) =\begin{cases}
-2 &\text{for $0<r<r_0$}\\
0 &\text{for $r>r_\infty$ }
\end{cases}
\end{equation}
and therefore
\begin{equation*}
\deg\bigl([\FF+(2(\lmin-1)\pi,0)]\circ \Pi^{-1},\BB(0,r_\infty+1)\setminus \overline{\BB(0,r_0/2)},0\bigr)=2
\end{equation*}	
Hence, by the property of topological degree we deduce that either $[\FF+(2(\lmin-1)\pi,0)]\circ \Pi^{-1}$ has an unique zero in the annulus $\BB(0,r_\infty+1)\setminus \overline{\BB(0,r_0/2)}$ with fixed-point index $2$, or it has at least two distinct zeros. Hence, to prove \ref{step:mainth3}, it suffices to show that a zero of $[\FF+(2(\lmin-1)\pi,0)]\circ \Pi^{-1}$ cannot have fixed-point index $2$.  Suppose, by contradiction, that there exists an isolated zero $\bar z\in \R^2\setminus \{0\}$ of $[\FF+(2(\lmin-1)\pi,0)]\circ \Pi^{-1}$ with fixed-point index $2$. We notice that  $\Pi$ is an orientation-preserving local diffeomorphism; therefore every point $(\bar\phi,\bar r)\in \Pi^{-1}(\bar z)$ is an isolated zero of $\FF+(2(\lmin-1)\pi,0)$ with the same fixed-point index $2$. However, by Corollary \ref{corol:degree_lift} this is not possible, since the zeros of $\FF+(2(\lmin-1)\pi,0)$ have fixed-point index at most equal to one.  Hence \ref{step:mainth3}.
\end{proof}
	
We notice that it is crucial to compute the degree of the map $\FF=\PP_T-I$ for the  lifted system on the halfplane, instead of the map $\Psi(T) -I$ for the planar system, since only in this way we can count the rotations of the recovered periodic solutions.

\begin{remark}[Sharpness of the lower bound] \label{rem:sharp}
We now show that the lower bound on the number of periodic solutions provided in Theorem \ref{th:main} is sharp.
We observe that examples for minimality in the general case can be recovered by suitably combining and adapting specific examples for the following three key situation. 
	\begin{enumerate}[label=\textup{\roman*)}]
		\item A system that is asymptotically linear in the origin with index $i_0$ even, and linear outside a given radius $r$ with $\abs{i_0-i_\infty}=1$, having  exactly two $T$-periodic solutions. This corresponds to point \ref{step:mainth3} of the proof. \label{it:min_zero}
		\item A system that is linear within a certain radius $r$ with index $i_0$ odd, and linear outside a given radius $R$ with $\abs{i_0-i_\infty}=2$, having  exactly two $T$-periodic solutions. This corresponds to point \ref{step:mainth1} of the proof. \label{it:min_pb}
		\item A system that is linear within a certain radius $R$ with index $i_0$ odd, and asymptotically linear at infinity with $\abs{i_0-i_\infty}=1$, having  exactly one $T$-periodic solution. This corresponds to point \ref{step:mainth2} of the proof. \label{it:min_infty}
	\end{enumerate}
To deal with these three cases, we consider autonomous systems. Since we have linear behaviour at infinity, for sufficiently small periods $T$ the only $T$-periodic orbits are fixed points. Such systems,  handling cases \ref{it:min_zero}, \ref{it:min_pb} and \ref{it:min_infty},
are illustrated respectively in Figures \ref{fig:zero}, \ref{fig:pb} and \ref{fig:infty}.
\end{remark}

\begin{figure}[tb]
	\centering
	\includegraphics[width=0.8\textwidth]{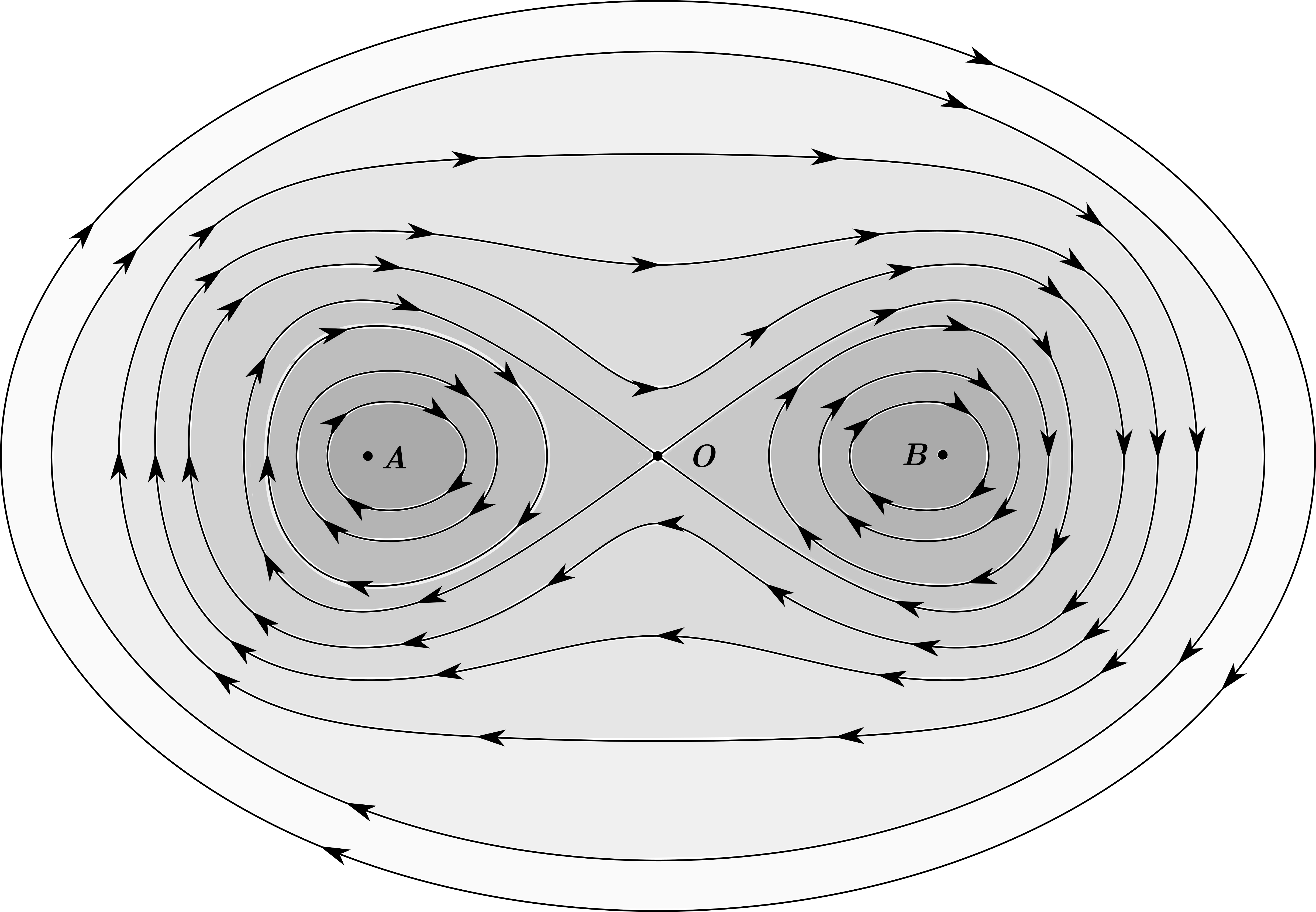}
	\caption{An Hamiltonian flow corresponding to case \ref{it:min_zero} of Remark \ref{rem:sharp}. For small periods $T$, the system has $i_0=0$, $i_\infty=-1$, and the only non-zero $T$-periodic orbits are the fixed points $A$ and $B$.}
	\label{fig:zero}
\end{figure}
\begin{figure}[tb]
	\centering
	\includegraphics[width=0.6\textwidth]{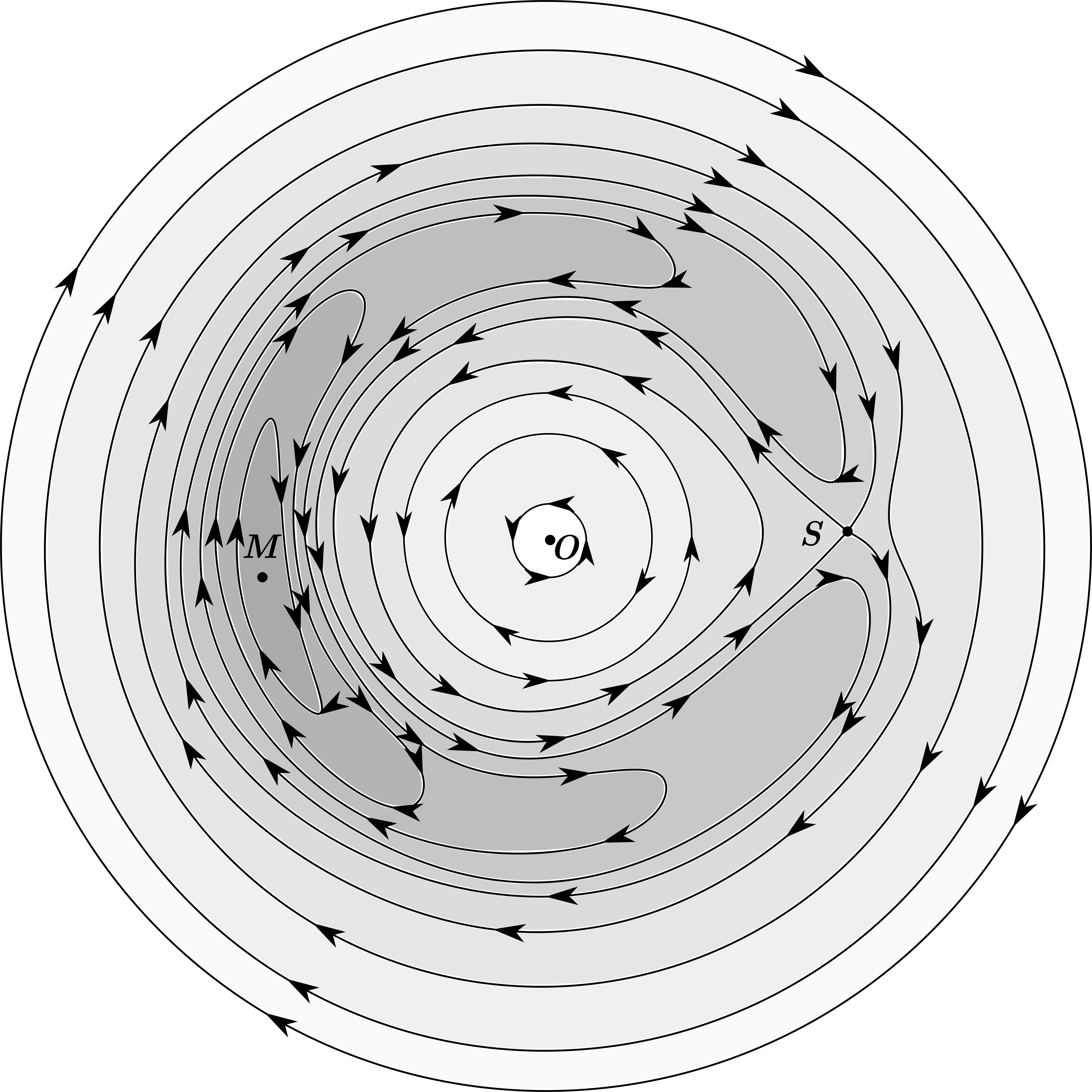}
	\caption{An Hamiltonian flow corresponding to case \ref{it:min_pb} of Remark \ref{rem:sharp}. For small periods $T$, the system has $i_0=1$, $i_\infty=-1$, and the only non-zero $T$-periodic orbits are the fixed points $M$ and $S$.}
	\label{fig:pb}
\end{figure}
\begin{figure}[tb]
	\centering
	\includegraphics[width=0.5\textwidth]{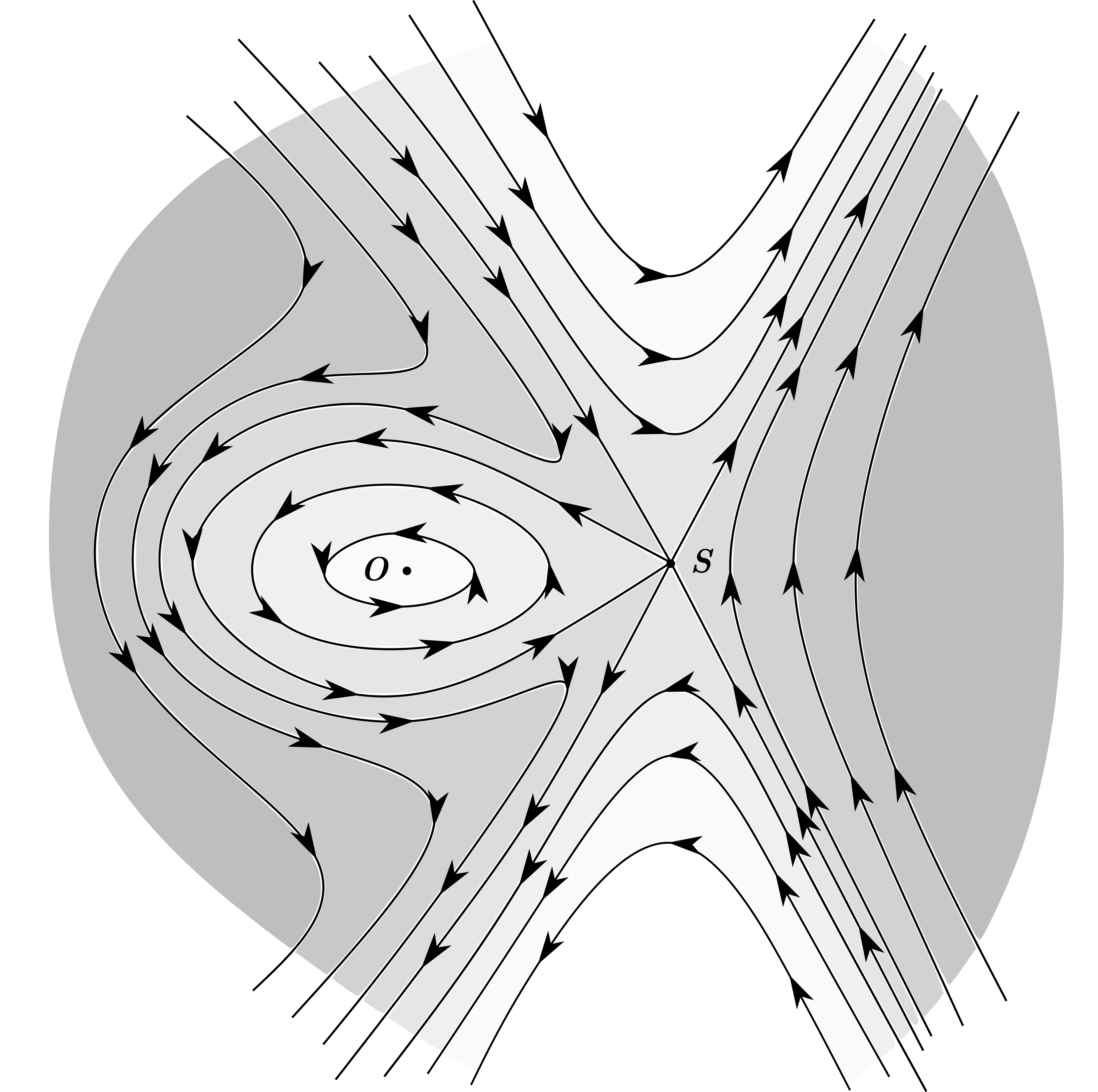}
	\caption{An Hamiltonian flow corresponding to case \ref{it:min_infty} of Remark \ref{rem:sharp}. For small periods $T$, the system has $i_0=1$, $i_\infty=0$, and the only non-zero $T$-periodic orbit is the fixed point $S$.}
	\label{fig:infty}
\end{figure}

\section{Second order ODEs and linear-like behaviour}
\label{sec:linlike}

A classical field of application of the Poincaré--Birkhoff Theorem, and related results, is provided by the second order differential equation
\begin{equation} \label{eq:sec_order}
\ddot x +q(t,x)x=0
\end{equation}
We make the following assumptions
\begin{enumerate}[label={\textup{(Q$_\mathrm{reg}$)}}]
	\item The function $q\colon \R\times \R\to \R$ is continuous, continuously differentiable in $x$, and periodic in $t$ with period $T$.	 \label{cond:q1}
\end{enumerate}
It is well know that equation \eqref{eq:sec_order} can be reformulated as a planar Hamiltonian system \eqref{eq:HSplanar}. Indeed, for  $z=(x,y)$, it suffices to set
\begin{equation}\label{eq:HSsec_order}
\begin{cases}
\dot x= -y\\
\dot y= q(t,x)x
\end{cases}
\end{equation}
so that the associated Hamiltonian function is 
\begin{equation*}
H(t,x,y)=\frac{1}{2}y^2 + \int_{0}^{x}q(t,\xi)\xi \dd \xi
\end{equation*}

The behaviour at zero and infinity is therefore controlled by the function $q$, and the conditions for asymptotic linearity at zero and infinity can be expressed as follows.

\begin{enumerate}[label={\textup{(Q$_0$)}}]
	\item There exists a continuous, $T$-periodic function $a(t)$ such that $\displaystyle\lim_{x\to 0} q(t,x)= a(t)$ uniformly in $t$. \label{cond:q0}
\end{enumerate}
\begin{enumerate}[label={\textup{(Q$_\infty$)}}]
	\item There exists a continuous, $T$-periodic function $b(t)$ such that $\displaystyle\lim_{\abs{x}\to +\infty} q(t,x)=b(t) $ uniformly in $t$. \label{cond:qinfty}
\end{enumerate}
In this framework, it is straightforward to obtain the following result.
\begin{corol} \label{cor:soODE}
	If the second order equation \eqref{eq:sec_order} satisfies conditions \ref{cond:q1}, \ref{cond:q0}, \ref{cond:qinfty}, then the associated Hamiltonian planar system \eqref{eq:HSsec_order} satisfies conditions \ref{cond:h1},  \ref{cond:h0}, \ref{cond:hinfty}, with the asymptotic behaviour at zero and infinity described respectively by the matrices 
	\begin{align} \label{eq:def_matrix}
	\A(t)=\begin{pmatrix}
	a(t) & 0\\ 0 & 1
	\end{pmatrix}&&
	\B(t)=\begin{pmatrix}
	b(t) & 0\\ 0 & 1
	\end{pmatrix}
	\end{align}
	In particular, if the planar linear Hamiltonian systems with matrices $\A(t)$ and $\B(t)$ are both $T$-nonresonant, with Maslov indices respectively $i_0$ and $i_\infty$, then the same conclusions of Theorem \ref{th:main} hold for \eqref{eq:sec_order}.
\end{corol}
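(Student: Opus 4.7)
The strategy is to reduce the corollary to a direct application of Theorem~\ref{th:main}: it suffices to check that, under \ref{cond:q1}, \ref{cond:q0}, \ref{cond:qinfty}, the Hamiltonian $H(t,x,y)=\tfrac{1}{2}y^2+\int_{0}^{x}q(t,\xi)\xi\,\dd\xi$ and the associated planar system \eqref{eq:HSsec_order} satisfy \ref{cond:h1}, \ref{cond:h0}, \ref{cond:hinfty} with $\A(t)$ and $\B(t)$ given by \eqref{eq:def_matrix}. All three verifications are carried out directly on the explicit form of $H$.

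For \ref{cond:h1}, continuity and $T$-periodicity of $H$, together with continuous differentiability in $z$, are immediate from \ref{cond:q1} applied to the integral expression of $H$. The gradient is $D_zH(t,x,y)=(q(t,x)x,\,y)$, which is trivially Lipschitz in $y$. The only nontrivial verification is the uniform Lipschitz continuity in $x$ of the map $x\mapsto q(t,x)x$, which I plan to obtain by controlling $\partial_x[q(t,x)x]=q_x(t,x)x+q(t,x)$: the pointwise uniform limits in \ref{cond:q0}, \ref{cond:qinfty}, combined with continuity, make $q$ globally bounded, and then a standard argument yields a uniform Lipschitz constant on all of $\R$.

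For \ref{cond:h0}, a direct computation gives $D_zH(t,0,0)=(0,0)$ and
\begin{equation*}
D_{zz}H(t,0,0)=\begin{pmatrix}q(t,0)&0\\0&1\end{pmatrix}=\A(t),
\end{equation*}
where the identification $q(t,0)=a(t)$ follows from continuity of $q$ together with \ref{cond:q0}. For \ref{cond:hinfty}, one writes
\begin{equation*}
D_zH(t,z)-\B(t)z=\bigl((q(t,x)-b(t))x,\,0\bigr),
\end{equation*}
so that, for $|x|\geq 1$, $\|D_zH(t,z)-\B(t)z\|/\|z\|\leq|q(t,x)-b(t)|$, which tends to $0$ uniformly in $t$ by \ref{cond:qinfty}; for $|x|<1$ the numerator is bounded, while $\|z\|\to\infty$ forces $|y|\to\infty$, so the ratio still vanishes. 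This yields the uniform limit required by \ref{cond:hinfty}.

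Once the three hypotheses are established, Theorem~\ref{th:main} applies verbatim to \eqref{eq:HSsec_order}, and $T$-periodic solutions of the planar system correspond bijectively to those of \eqref{eq:sec_order} via the first component $x=z_1$. The main technical point of the proof is the uniform Lipschitz estimate in \ref{cond:h1}; every other step is a routine computation from the explicit form of $H$.
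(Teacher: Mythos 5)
Your overall strategy---direct verification of \ref{cond:h1}, \ref{cond:h0}, \ref{cond:hinfty} followed by an appeal to Theorem \ref{th:main}---is exactly what the paper intends (it offers no written proof, calling the result straightforward), and your computation of $D_{zz}H(t,0)$ and the identification $q(t,0)=a(t)$ are fine. However, the step you yourself single out as the main technical point does not work as written. Boundedness of $q$ (which does follow from \ref{cond:q1}, \ref{cond:q0}, \ref{cond:qinfty}) gives no control on $\partial_x[q(t,x)x]=q_x(t,x)x+q(t,x)$: the term $q_x(t,x)x$ can be unbounded even when $q$ converges uniformly at infinity. For instance $q(t,x)=1+\sin(x^3)/(1+x^2)$ satisfies all three Q-conditions, yet $\partial_x[q(t,x)x]$ contains a term behaving like $3x\cos(x^3)$, so $x\mapsto q(t,x)x$ is \emph{not} globally Lipschitz and no standard argument will make it so. What the hypotheses actually deliver is that $D_zH(t,\cdot)$ is locally Lipschitz uniformly in $t$ (from continuity of $q_x$ on compact sets), together with at most linear growth of the vector field (from \ref{cond:qinfty} and the boundedness of $q$); this combination is what guarantees uniqueness and global continuability of solutions, hence a well-defined Poincar\'e map, which is all that the arguments of Theorem \ref{th:main} use. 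You should either state this explicitly or read \ref{cond:h1} in this weaker, local sense, as the authors implicitly do.

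A smaller slip: in your verification of \ref{cond:hinfty}, the dichotomy $\abs{x}\geq 1$ versus $\abs{x}<1$ does not cover the regime where $\abs{x}$ stays in a bounded set, say $\abs{x}\in[1,M]$, while $\abs{y}\to\infty$; there your bound $\abs{q(t,x)-b(t)}$ does not tend to zero. The fix is routine: given $\epsilon>0$ choose $R=R(\epsilon)$ with $\abs{q(t,x)-b(t)}<\epsilon$ for $\abs{x}>R$, bound the numerator by $\epsilon\norm{z}$ there and by a constant depending on $R$ for $\abs{x}\leq R$, and let $\norm{z}\to\infty$.
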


Since our approach in based on topological methods, the condition of (asymptotic) linearity is used only as a tool to quickly identify a qualitative behaviour of the system, instead of as a strict technical requirement. To illustrate this situation we now introduce weaker conditions at zero and infinity for nonlinear systems with a linear-like behaviour.

\subsection*{Linear-like behaviour at the origin}

Let us first notice that, to study meaningful nonlinearities at the origin, we need first to weaken our regularity assumptions on $q$, excluding the origin from its domain.
\begin{enumerate}[label={\textup{(Q$_\mathrm{reg}^*$)}}]
	\item The function $q\colon \R\times (\R\setminus\{0\})\to \R$ is continuous, continuously differentiable in $x$, and periodic in $t$ with period $T$.	 \label{cond:q1_star}
\end{enumerate}

Before to introduce the condition of linear-like behaviour at the origin, let us first recall the monotonicity properties of Maslov index.
For any symmetric matrix $\A$, we write $\A>0$ if it is positive definite, and $\A\geq 0$ if it is positive semidefinite. Hence $\A\leq \B$ means that $\B-\A\geq 0$.
Moreover, for every time-dependent symmetric matrix $\A\in\CC( [0,T],\R^{2\times 2})$, let us denote with $i_\A$ the Maslov index associated to the linear system \eqref{eq:planar_linear} with matrix $\A$.

\begin{prop}[Monotonicity of Maslov index, cf.~\cite{Liu}] Let us consider two time-dependent symmetric matrices $\A,\B\in\CC([0,T],\R^{2\times 2})$. If $\A\leq \B$, then $i_{\A} +\nu_\A \leq i_{\B}.$	
	\label{prop:Maslov_monot}
\end{prop}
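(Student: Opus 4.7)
The strategy is the classical crossing-form argument for Maslov-type indices. I would first connect $\A$ to $\B$ by the affine homotopy $\A_s = (1-s)\A + s\B$, $s\in[0,1]$, noting that each $\A_s\in\CC([0,T],\R^{2\times 2})$ is symmetric with $\partial_s \A_s = \B - \A \geq 0$ pointwise in $t$. Letting $\Psi_s(t)$ denote the fundamental matrix of $\dot z = J\A_s(t)z$, the index function $s\mapsto i_{\A_s}$ is locally constant on the complement of the crossing set $\{s_*\in[0,1]: \Psi_{s_*}(T)\in\Gamma^0\}$, since the Maslov-type index is a homotopy invariant on paths ending outside the resonant cycle.

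At each crossing $s_*$ I would then compute the crossing form on $\ker(I-\Psi_{s_*}(T))$. Using the variational formula $\partial_s \Psi_s(T) = \Psi_s(T)\int_0^T \Psi_s(t)^{-1} J(\partial_s \A_s)(t)\Psi_s(t)\,\dd t$ together with the symplectic identity $\Psi^T J\Psi = J$, a standard calculation (as in Long's crossing form, or Robbin--Salamon adapted to the symplectic setting) identifies this quadratic form with
\begin{equation*}
\xi\mapsto \int_0^T \scal{(\B-\A)(t)\,w(t)}{w(t)}\,\dd t,\qquad w(t)=\Psi_{s_*}(t)\xi,
\end{equation*}
which is positive-semidefinite since $\B-\A\geq 0$. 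Hence every crossing contributes a nonnegative jump to $s\mapsto i_{\A_s}$, and the index is monotone nondecreasing along the homotopy.

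Summing these nonnegative jumps over $[0,1]$ and isolating the contribution at the left endpoint $s=0$, the convention (standard in Long's index theory) that $i_\A$ at a resonant $\A$ equals the Maslov-type index of a small perturbation of $\A$ in the \emph{negative} direction forces the jump across $s=0^+$ to equal the full nullity $\nu_\A$: moving in the direction $+(\B-\A)\geq 0$ shifts every resonant eigenvalue of $\Psi_\A(T)$ off the value $1$ in the positive sense. Adding the remaining nonnegative contributions from the crossings in $(0,1]$ yields $i_\B\geq i_\A+\nu_\A$, as claimed. An alternative route in this two-dimensional setting is to exploit the identity $\dot\phi=\scal{L(t)v}{v}$ with $v=(\cos\phi,-\sin\phi)$ derived in Section~\ref{sec:linear}: from $\A\leq\B$ one gets pointwise $\dot\phi_\A\leq\dot\phi_\B$, and a Sturm comparison on the scalar angular equation lets one read off the comparison of the endpoints of the Sp$(1)$-paths directly from Lemma~\ref{lem:poincaremap}.

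The main obstacle in either plan is the careful verification of the endpoint convention: one must check that, when $\B-\A$ is only semidefinite (and not strictly positive on the relevant kernel), the full $\nu_\A$ is nevertheless produced at $s=0$ by the sign convention used to extend the Maslov-type index to resonant matrices. This subtlety, together with the precise crossing-form identity, is the technical heart of the monotonicity statement and is worked out in detail in Long's monograph on symplectic index theory and in the cited reference \cite{Liu}, to which the final write-up would defer for the rigorous justification.
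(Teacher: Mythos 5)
The paper offers no proof of Proposition~\ref{prop:Maslov_monot}: it is imported verbatim as a known result with the citation \cite{Liu}, so there is no in-paper argument to measure yours against. Your crossing-form outline --- the affine homotopy $\A_s=(1-s)\A+s\B$, local constancy of the index away from the resonant set $\Gamma^0$, and the identification of the crossing form on $\ker(I-\Psi_{s_*}(T))$ with $\xi\mapsto\int_0^T\scal{(\B-\A)(t)w(t)}{w(t)}\dd t$ --- is exactly the argument of the cited reference and of Long's index theory \cite{Long}, so as a reconstruction of the intended proof it is on target; the alternative angular/Sturm comparison you sketch is also a legitimate and more elementary route in this planar setting, and would mesh well with the $\Gamma^\pm$ description of Section~\ref{sec:linear}.

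However, the ``main obstacle'' you flag at the end is not a deferrable technicality: it is precisely where the statement, read literally, breaks down, and the references resolve it only under hypotheses that exclude the degenerate case. If $\B-\A$ is merely positive semidefinite, the crossing form at $s=0$ can vanish on part of $\ker(I-\Psi_\A(T))$ --- the extreme case $\A=\B$ with $\nu_\A>0$ already falsifies $i_\A+\nu_\A\le i_\A$ --- and then the jump at $s=0^+$ is strictly smaller than $\nu_\A$; your heuristic that a perturbation in a nonnegative direction ``shifts every resonant eigenvalue off $1$ in the positive sense'' fails exactly when $(\B-\A)(t)w(t)\equiv 0$ along a resonant solution $w$. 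The sharp conclusion $i_\A+\nu_\A\le i_\B$ requires the crossing form at $s=0$ to be positive \emph{definite} on the kernel, i.e.\ $\int_0^T\scal{(\B-\A)(t)w(t)}{w(t)}\dd t>0$ for every nontrivial $T$-periodic solution $w$ of the $\A$-system; without this one only obtains $i_\A\le i_\B$ and $i_\A+\nu_\A\le i_\B+\nu_\B$. Note that pointwise definiteness of $\B-\A$ is not needed (and indeed fails in the paper's second-order applications, where the difference matrix has a zero $(2,2)$ entry): the strict separation $a_1<q<a_2$ in \ref{cond:q0_star} and \ref{cond:qinfty_star} suffices because a nontrivial solution of $\ddot x+a_1x=0$ has $x\not\equiv 0$. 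Your write-up should therefore either add this definiteness-on-the-kernel hypothesis or record the weaker conclusion in the purely semidefinite case; as it stands, deferring to \cite{Liu} and \cite{Long} does not close the gap for the proposition exactly as stated.
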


\begin{corol} \label{cor:conf_matrix}
	Let us consider three time-dependent symmetric matrix $\B,\B_1,\B_2\in\CC( [0,T],\R^{2\times 2})$. Assume that the matrices $\B_1,\B_2$ are $T$-nonresonant and have equal associated Maslov indices $i_{\B_1}= i_{\B_2}$. If $\B_1(t)\leq \B(t) \leq \B_2(t)$ for every $t\in [0,T]$, then also $\B$ is $T$-nonresonant and have  Maslov index $i_{\B}=i_{\B_1}= i_{\B_2}$.
\end{corol}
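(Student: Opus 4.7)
The plan is to apply the monotonicity property of Proposition \ref{prop:Maslov_monot} as a sandwich, using $\B$ in the middle of the chain $\B_1 \leq \B \leq \B_2$. The statement of Proposition \ref{prop:Maslov_monot} is phrased in terms of the generalized Maslov index (i.e.\ the pair $(i_\A,\nu_\A)$), so it is available even when the matrix in the middle is a priori possibly resonant; this is exactly what we need, since $T$-nonresonance of $\B$ is part of the thesis rather than part of the hypothesis.

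First I would apply Proposition \ref{prop:Maslov_monot} to the pair $\B_1 \leq \B$. Since $\B_1$ is $T$-nonresonant, $\nu_{\B_1}=0$, and the inequality reduces to $i_{\B_1}\leq i_\B$. Next I would apply the same proposition to the pair $\B \leq \B_2$, which, since $\B_2$ is also $T$-nonresonant, yields $i_\B+\nu_\B \leq i_{\B_2}$. Combining these two inequalities with the hypothesis $i_{\B_1}=i_{\B_2}$ gives
\begin{equation*}
i_\B + \nu_\B \leq i_{\B_2}=i_{\B_1}\leq i_\B,
\end{equation*}
hence $\nu_\B\leq 0$. Since nullities are nonnegative integers, necessarily $\nu_\B=0$, which is precisely the $T$-nonresonance of $\B$. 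Plugging $\nu_\B=0$ back into the chain forces $i_{\B_1}\leq i_\B\leq i_{\B_2}=i_{\B_1}$, whence $i_\B=i_{\B_1}=i_{\B_2}$, as claimed.

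There is really no hard step here: the only subtlety is remembering to use the generalized index form of monotonicity so that $\B$ is allowed to be resonant inside the argument, and then deducing nonresonance a posteriori from $\nu_\B\leq 0$. Everything else is an immediate two-sided squeeze.
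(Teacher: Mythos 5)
Your argument is correct and is essentially identical to the paper's own proof: both apply the monotonicity of Proposition \ref{prop:Maslov_monot} on each side of the sandwich $\B_1\leq\B\leq\B_2$ to obtain the chain $i_{\B_1}\leq i_{\B}\leq i_{\B}+\nu_{\B}\leq i_{\B_2}=i_{\B_1}$, from which $\nu_{\B}=0$ and $i_{\B}=i_{\B_1}$ follow. Your explicit remark that the generalized (possibly resonant) form of the index is what allows $\B$ to sit in the middle is a helpful clarification, but not a different method.
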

\begin{proof}
	We observe that, by Proposition \ref{prop:Maslov_monot} and since $\nu_{\B_1}=\nu_{\B_2}=0$, we have, 
\begin{equation*}
	i_{\B_1} \leq i_{\B} \leq i_{\B}+\nu_{\B} \leq i_{\B_2} =i_{\B_1} 
\end{equation*}
	Hence $\nu_{\B}=0$ and $i_{\B}=i_{\B_1}$.
\end{proof}

Corollary \ref{cor:conf_matrix} shows us that we can characterize the behaviour of a linear system, when it is controlled by two other linear systems with the same index. Our plan is to pursue this idea, by considering a nonlinear system bounded by linear ones. Hence, we replace condition \ref{cond:q0} with

\begin{enumerate}[label={\textup{(Q$_0^*$)}}]
	\item There exist $\bar r_0>0$ and two continuous, $T$-periodic functions $a_1(t),a_2(t)$ such that $a_1(t)< q(t,x) < a_2(t)$ for every $0<\abs{x}<\bar r_0$, $t\in[0,T]$. Moreover, the matrices 
	\begin{equation}
	\A_1(t):=\begin{pmatrix}
	a_1(t) & 0\\ 0 & 1
	\end{pmatrix}\qquad 
	\A_2(t):=\begin{pmatrix}
	a_2(t) & 0\\ 0 & 1
	\end{pmatrix}
	\end{equation}
	are $T$-nonresonant with Maslov index $i_0=i_{\A_1}=i_{\A_2}$. \label{cond:q0_star}
\end{enumerate}

We remark that \ref{cond:q1_star} and \ref{cond:q0_star} guarantee that the vector field associated to the planar system \eqref{eq:HSsec_order}  can be extended  to a locally Lipschitz vector field on  the whole plane by setting it equal to zero in the origin.

\begin{theorem} \label{th:linlike}
	Let us consider the Hamiltonian system \eqref{eq:HSsec_order}  and assume that \ref{cond:q1_star}, \ref{cond:q0_star} and \ref{cond:qinfty} are satisfied, with $i_\B=i_\infty$ for the matrix $\B(t)$ in \eqref{eq:def_matrix}. Then the same conclusions of Theorem \ref{th:main} hold.
\end{theorem}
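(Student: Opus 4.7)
The plan is to reduce Theorem~\ref{th:linlike} to Theorem~\ref{th:main} by establishing, under the weaker assumption~\ref{cond:q0_star}, analogs of the rotation and degree statements of Lemma~\ref{lemma:rot_zero}. Since condition~\ref{cond:qinfty} at infinity is unchanged, Lemma~\ref{lemma:rot_infty} applies as stated, and the argument of Theorem~\ref{th:main} then goes through verbatim.

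For the rotation bound, I would rely on a Sturm-type comparison for the angular equation
\[
\dot\phi=-\bigl(\sin^2\phi+q(t,r\cos\phi)\cos^2\phi\bigr).
\]
The boundedness of $q$ on $(-\bar r_0,\bar r_0)\setminus\{0\}$ supplies an a priori bound on $\dot r/r$, ensuring that solutions starting at sufficiently small radius remain in $|x|<\bar r_0$ throughout $[0,T]$. On that set the inequality $a_1<q<a_2$ brackets the angular velocity between those produced by the two linear systems with matrices $\A_1$ and $\A_2$; since $i_{\A_1}=i_{\A_2}=i_0$, both endpoints of the bracket lie in the interval prescribed by Lemma~\ref{lem:poincaremap}, and by convexity so does the nonlinear rotation $-\FF_1(\cdot,r)$.

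For the degree statement, I would introduce the homotopy $q_\lambda:=\lambda q+(1-\lambda)a_1$, $\lambda\in[0,1]$, interpolating between the nonlinear system and the linear $\A_1$ one. Letting $\FF^\lambda$ denote the associated lifted maps, Corollary~\ref{corol:homotopy} yields
\[
\mydeg(\FF+(2M\pi,0),r_*)=\mydeg(F^{\A_1}+(2M\pi,0),r_*)
\]
provided $\FF^\lambda(\phi,r_*)+(2M\pi,0)\neq 0$ for every $\phi\in\R$ and $\lambda\in[0,1]$, at some sufficiently small $r_*>0$. The right-hand side equals $-2$ if $i_0=2M$ and $0$ otherwise, by Corollary~\ref{corol:deg_linear}.

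This non-vanishing is the crux of the argument. Its failure would supply a nontrivial $T$-periodic solution $z=(x,y)$ of some $q_\lambda$-system with $|z(0)|=r_*$; the radial estimate confines the orbit to $|z|<\bar r_0$, so along the trajectory $a_1(t)\leq q_\lambda(t,x(t))\leq a_2(t)$ (at the isolated zeros of $x$ one assigns any value in $[a_1(t),a_2(t)]$, the equation for $z$ being unaffected). Setting $\mathcal{Q}(t):=q_\lambda(t,x(t))$, $z$ becomes a nontrivial $T$-periodic solution of the Hill equation $\ddot\xi+\mathcal{Q}(t)\xi=0$, whose associated matrix $\mathrm{diag}(\mathcal{Q},1)$ lies between $\A_1$ and $\A_2$ pointwise. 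Invoking a bounded-measurable extension of Corollary~\ref{cor:conf_matrix} (obtained by approximating $\mathcal{Q}$ by continuous functions with values in $[a_1,a_2]$ and passing to the limit through Proposition~\ref{prop:Maslov_monot}), this Hill equation is $T$-nonresonant with Maslov index $i_0$, which contradicts the existence of $z$. I expect this measurable-coefficient extension of the monotonicity principle to be the main technical obstacle.
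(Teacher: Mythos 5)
Your proposal is correct and follows the paper's overall reduction — prove the twist and degree properties near the origin under \ref{cond:q0_star}, leave the behaviour at infinity to Lemma \ref{lemma:rot_infty}, and rerun the proof of Theorem \ref{th:main} — but the implementation of the degree transfer is genuinely different. The paper freezes the coefficient along each solution to get a circle's worth of fundamental matrices $\Psi_\alpha(T)$, observes via Corollary \ref{cor:conf_matrix} that they all lie in the same nonresonant component $\Gamma^\bullet$ of $\Sp(1)$, and exploits the contractibility of $\Gamma^\bullet$ to build a homotopy of matrices which is then lifted to the covering by a carefully normalized section $j$; admissibility is immediate because matrices in $\Gamma^\bullet$ have no fixed vectors. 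You instead homotope the equation itself through $q_\lambda=\lambda q+(1-\lambda)a_1$ and prove admissibility by the freezing argument applied to a hypothetical periodic solution at radius $r_*$; this avoids the lift $j$ and the contractibility argument entirely, at the price of checking continuous dependence of the lifted Poincaré map on $\lambda$ and a Gronwall bound uniform in $\lambda$ (both routine here, since $|q_\lambda|$ is bounded by $\max\{\sup|a_1|,\sup_{|x|<\bar r_0}|q|\}$). Your Sturm-comparison treatment of the rotation bound is likewise a standard alternative to the paper's pointwise freezing plus Lemma \ref{lemma:rot_zero}. The one technical point you flag — that the frozen coefficient $\mathcal{Q}(t)=q_\lambda(t,x(t))$ may be discontinuous (indeed undefined) at the isolated zeros of $x$, so that Corollary \ref{cor:conf_matrix} and Proposition \ref{prop:Maslov_monot} must be invoked for bounded measurable coefficients — is not specific to your argument: the paper's own $\tilde q(t)=q(t,\tilde x(t))$ has exactly the same defect and is passed over in silence, and the $L^\infty$ version of the monotonicity of the Maslov index is standard. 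So this is an honest technicality shared by both proofs, not a gap in yours.
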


\begin{proof}
The proof of the Theorem follows the same lines of that of Theorem \ref{th:main}. It suffices to prove that the twist property \eqref{eq:twistcond} and the degree properties \eqref{eq:deg_estimate1} and \eqref{eq:deg_estimate2} are satisfied. This is trivially true for their parts concerning $r>r_\infty$, since by Corollary \ref{cor:soODE} we know that system \eqref{eq:HSsec_order} is asymptotically linear at infinity, and so the argument used in the proof of Theorem \ref{th:main} holds.  	
We now prove that \eqref{eq:twistcond}, \eqref{eq:deg_estimate1} and \eqref{eq:deg_estimate2} hold also near the origin.
	
Let us observe that, for $\abs z<\bar r_0$, the planar vector field associated to \eqref{eq:HSsec_order} admits a linear bound. By Gronwall's Lemma, we deduce the existence of $r_0>0$ such that every solution $\tilde z(t)$ of \eqref{eq:HSsec_order}, with initial datum $\abs{\tilde{z}(0)}<r_0$, satisfies $\abs{\tilde{z}(t)}<\bar r_0$ for every $t\in [0,T]$.
	
We claim that the desired properties are satisfied for this choice of $r_0$. Indeed, let $\tilde z(t)=(\tilde x(t),\tilde y(t))$ be a solution of \eqref{eq:HSsec_order} with initial datum $\abs{\tilde{z}(0)}<r_0$. We set
\begin{align} \label{eq:point_linearized}
	\tilde{q}(t):=q(t,\tilde x(t))  &&	\tilde \A(t)=\begin{pmatrix}
	\tilde q(t) & 0\\ 0 & 1
	\end{pmatrix}
\end{align}

Clearly $\tilde z(t)$ is also a solution of the planar linear system
\begin{equation}
	\dot{\tilde z}=J\tilde \A(t)\tilde z
\end{equation}
On the other hand, since $\abs{\tilde{z}(t)}<\bar r_0$ for every $t\in [0,T]$, by \ref{cond:q0_star} we deduce that, for every $t\in [0,T]$ we have $a_1(t)< \tilde q(t) < a_2(t)$. Thus $\A_1(t)\leq \tilde\A(t) \leq \A_2(t)$ for every $t\in [0,T]$ and, by Corollary \ref{cor:conf_matrix}, we deduce that $\tilde\A(t)$ is $T$-nonresonant with Maslov index $i_0$. 
By Lemma  \ref{lemma:rot_zero}  we deduce that \eqref{eq:twistcond} holds for the point $\tilde z(0)$. By the generality of the choice of $\tilde z(0)$, \eqref{eq:twistcond} is true. 
	
	\medskip
To prove \eqref{eq:deg_estimate1} and \eqref{eq:deg_estimate2} a point-wise argument is no longer sufficient. Let us therefore fix $r^*\in (0,r_0)$ and consider the solutions of \eqref{eq:HSsec_order} with initial datum $\tilde{z}(0)$ expressed in polar coordinates by $(\alpha,r^*)$, with $\alpha\in[0,2\pi)$.

For every initial datum $\tilde{z}(0)$, we define the associated matrix $\tilde\A_\alpha(t)$ as in \eqref{eq:point_linearized}. Proceeding as in Section \ref{sec:linear}, we define the fundamental matrix $\Psi_\alpha(t)$ associated to the system $\dot{\tilde z}=J\tilde \A_\alpha(t)\tilde z$. We observe that every solution $\tilde z$ of \eqref{eq:HSsec_order} satisfies  $\tilde z(T)=\Psi_\alpha(T)\tilde z(0)$, where $\alpha$ varies accordingly to the initial datum.

Since $\A_1(t)$ and all the matrices $\tilde \A_\alpha$ have the same Maslov index, we deduce $\Psi_{\A_1}(T)$ and all the $\Psi_\alpha(T)$ are in the same component $\Gamma^\bullet\in\{\Gamma^+,\Gamma^-\}$ of $\Sp(1)$.
Since each of these components in contractible, there exists a homotopy $\HH\colon \mathbb{S}^1 \times [0,1]\to \Gamma^\bullet$ such that, for every $\alpha\in\mathbb{S}^1$
\begin{align*}
\HH (\alpha,0)=\Psi_{\A_1}(T) && \HH (\alpha,1)=\Psi_{\alpha}(T)	
\end{align*}

Let us denote with $\PP_{\A_1}(t,\phi,r)$ the Poincaré time map associated, as in \eqref{eq:linear_Pmap}, to the linear Hamiltonian system with matrix $\A_1(t)$, and analogously we denote with $\PP_{\alpha}(t,\phi,r)$ the ones corresponding to the matrices $\tilde \A_\alpha(t)$.
We define the map $j\colon  \R^2\setminus\{0\}\to \R\times\R^+ $ as the only map such that $\Pi\circ j=I$ and 
\begin{equation}\label{eq:def_j}
j(\Psi_{\A_1}(T)z)=\PP_{\A_1}(T,\phi,r)	
\end{equation}
where $z=(r \cos \phi, -r \sin \phi)$. We observe that, in general,  $\Pi\circ j=I$ defines $j$ up to horizontal translation of multiples of $2\pi$, corresponding to the  windings around the origin. Equation \eqref{eq:def_j} gives us the exact number of windings to consider, and so determines  $j$ univocally.
Moreover, since the number of windings is determined by the Maslov index, we deduce that, for the same choice of $j$, we also have, for every $\alpha\in\mathbb{S}^1$, 
\begin{equation*}
j(\Psi_{\alpha}(T)z)=\PP_{\alpha}(T,\phi,r)	
\end{equation*}
For any $M\in \Z$, we define the homotopy $\HM\colon \mathbb{S}^1\times [0,1]\to \R^2$ as
\begin{equation*}
\HM(\phi,\lambda):=j\bigl(\HH(\phi,\lambda)(r^* \cos \phi, -r^* \sin \phi)\bigr)-(\phi+2M\pi,0)
\end{equation*}
For $\lambda=0$ we have
\begin{equation*}
\HM(\phi,0)=\PP_{\A_1}(T,\phi,r^*)-(\phi+2M\pi,0)
\end{equation*}
and $\deg \HM(\cdot,0)$ is known by Corollary \ref{corol:deg_linear} and depends only on $i_0$.
On the other hand, denoting with $\PP(t,\phi,r)$ the Poincaré map in polar coordinates of \eqref{eq:HSsec_order}, for $\lambda=1$ we have
\begin{equation*}
\HM(\phi,1)=\PP(T,\phi,r^*)-(\phi+2M\pi,0)
\end{equation*}

To complete the proof, we have to show that \begin{equation*}
\mydeg\bigl(\PP(T,\cdot,\cdot)-(\phi+2M\pi,0),r^*\bigr)=\mydeg\bigl(\PP_{\A_1}(T,\cdot,\cdot)-(\phi+2M\pi,0),r^*\bigr)\end{equation*}
 Since the homotopy $\HM$ is continuous, this equality follows by Corollary \ref{corol:homotopy}, provided that  that $\HM(\phi,\lambda)\neq 0$ for every $(\phi,\lambda)\in \mathbb{S}^1\times [0,1]$. To show this latter property, let us suppose by contradiction that there exists $(\phi',\lambda')\in \mathbb{S}^1\times [0,1]$ such that $\HM(\phi',\lambda')=0$. Then 
\begin{equation*}
\HH(\phi',\lambda')(r^* \cos \phi', -r^* \sin \phi')=\Pi(\phi'+2M\pi,r^*)=(r^* \cos \phi', -r^* \sin \phi')
\end{equation*}
which is false since $\HH(\phi',\lambda')\in \Sp(1)\setminus \Gamma^0$, which implies that $\HH(\phi',\lambda')z\neq z$ for every $z\in \R^2\setminus\{0\}$.

\end{proof}

\subsection*{Linear-like behaviour at infinity}

 Analogously to the case of linear-like behaviour at the origin, we replace the condition \ref{cond:qinfty} at infinity with the following one.
	\begin{enumerate}[label={\textup{(Q$_\infty^*$)}}]
	\item There exist $\bar r>0$ and two continuous, $T$-periodic functions $b_1(t),b_2(t)$ such that $b_1(t)< q(t,x) < b_2(t)$ for every $\abs{x}>\bar r$, $t\in[0,T]$. Moreover, the matrices 
	\begin{equation} 
	\B_1(t):=\begin{pmatrix}
	b_1(t) & 0\\ 0 & 1
	\end{pmatrix}\qquad 
	\B_2(t):=\begin{pmatrix}
	b_2(t) & 0\\ 0 & 1
	\end{pmatrix}
	\end{equation}
	are $T$-nonresonant with Maslov index $i_\infty=i_{\B_1}=i_{\B_2}$. \label{cond:qinfty_star}
\end{enumerate}

We remark that similar conditions at infinity have been proposed for higher dimensional Hamiltonian systems in \cite{FM1,FM2,Liu}, and are used to estimate rotations in  second order ODEs, e.g.~in~\cite{BG,DZ}. 

Before stating our multiplicity result, let us first state some properties of the dynamics of second order ODEs.

\begin{lemma} \label{lemma:crossing}
	Let $z(t)=(x(t),y(t))$ be a solution of \eqref{eq:HSsec_order}, with $q$ satisfying \ref{cond:q1_star}, \ref{cond:q0_star} and \ref{cond:qinfty_star}. We set
\begin{equation*}
	\Lambda:=\Lambda(\bar r)=\max \left\{\abs{q(t,x)} : (t,x)\in[0,T]\times [-\bar r,\bar r] \right\}
\end{equation*}
Then, for every time $t_1$ such that $x(t_1)\in[-\bar r,\bar r]$ and
\begin{align*}
\bar y:=y(t_1)>\Lambda \bar r T +\frac{2\bar r}{T}
&&\left(\text{resp.}\quad 
\bar y:=y(t_1)<-\Lambda \bar r T -\frac{2\bar r}{T}
\right)
\end{align*}
there exists a time $t_2\in \,]t_1,t_1+T[\,$ such that $x(t_2)=\bar r$ (resp. $x(t_2)=-\bar r$), $x(t)$ is strictly increasing (resp. decreasing) on $[t_1,t_2]$ and
\begin{equation*}
t_2-t_1\leq \frac{2\bar r}{\abs{\bar y}-\Lambda\bar rT}
\end{equation*}
\end{lemma}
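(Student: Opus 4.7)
The plan is to exploit the uniform bound $|\dot y(t)|=|q(t,x(t))x(t)|\leq \Lambda\bar r$, valid on any time interval during which $|x(t)|\leq \bar r$, in order to prevent $y$ from changing sign within one period. Once $y$ is controlled, the equation $\dot x=-y$ from \eqref{eq:HSsec_order} makes $\dot x$ bounded away from zero with a definite sign, which forces $x$ to exit the strip $\{|x|\leq \bar r\}$ monotonically and in an explicitly estimated time. I carry out the argument assuming $\bar y>\Lambda\bar r T+2\bar r/T$; the opposite case is symmetric.

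Concretely, I would define
\[
t_2:=\inf\{t>t_1 : |x(t)|=\bar r\}\in(t_1,+\infty],
\]
so that by construction $|x(t)|\leq \bar r$ on $[t_1,t_2]$. The definition of $\Lambda$ then gives $|\dot y(t)|\leq \Lambda\bar r$ on this interval, and integrating from $t_1$ yields
\[
|y(t)-\bar y|\leq \Lambda\bar r(t-t_1)\leq \Lambda\bar r T
\quad\text{for every } t\in[t_1,\min(t_2,t_1+T)].
\]
Combined with the lower bound on $\bar y$, this produces $y(t)\geq \bar y-\Lambda\bar r T>2\bar r/T>0$ throughout that interval, so $y$ retains its sign and $|\dot x(t)|=|y(t)|\geq \bar y-\Lambda\bar r T$ with constant sign. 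This immediately yields strict monotonicity of $x$ on $[t_1,\min(t_2,t_1+T)]$.

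A further integration of this lower bound on $|\dot x|$, starting from $|x(t_1)|\leq \bar r$, shows that the displacement of $x$ reaches the width $2\bar r$ of the strip no later than $s^{*}:=2\bar r/(\bar y-\Lambda\bar r T)$. The hypothesis $\bar y>\Lambda\bar r T+2\bar r/T$ is precisely what forces $s^{*}<T$, and therefore $t_2-t_1\leq s^{*}<T$, giving simultaneously that $t_2$ is finite, lies in $(t_1,t_1+T)$, and satisfies the announced upper bound. The argument contains no real obstacle beyond bookkeeping: the only delicate point is making sure that $t_2$ is strictly less than $t_1+T$ (not merely $\leq t_1+T$), which is exactly why the hypothesis on $|\bar y|$ contains the extra additive term $2\bar r/T$ on top of $\Lambda\bar r T$. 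One then just has to track signs so that the direction of monotonicity and the boundary of the strip reached at $t_2$ agree with the sign of $\bar y$ in the statement.
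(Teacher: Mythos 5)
Your argument is correct and is essentially the paper's own proof: both bound $\abs{\dot y}\leq \Lambda\bar r$ while $\abs{x}\leq\bar r$ to keep $y(t)\geq \bar y-\Lambda\bar rT>2\bar r/T>0$ on that interval, then integrate $\dot x$ to force a monotone exit from the strip within time $2\bar r/(\bar y-\Lambda\bar rT)<T$. The only cosmetic differences are that the paper phrases the exit time as the supremum of the set $E$ of times up to which the orbit has stayed in the strip (rather than your $\inf$-definition of $t_2$), and that it commits to the sign convention $\dot x=y$ whereas you defer the sign bookkeeping by working with $\abs{\dot x}=\abs{y}$.
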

\begin{proof}
We prove only one case, since the other one is analogous.	Let us define the set $E\subset \R$ as
\begin{equation*}
E:=\left\{t\in([t_1,t_1+T[\,: x(s)\in[-\bar r,\bar r] \quad \text{for every $s\in[t_1,t]$}\right\}
\end{equation*}
The set $E$ is nonempty and, for every $t\in E$, we have
\begin{equation*}
y(t)=\bar y -\int_{t_1}^{t}q(s,x(s))x(s)\dd s\geq \bar y-\Lambda\bar r T>0
\end{equation*}
Hence
\begin{equation}\label{eq:stime_crossing}
x(t)-x(t_1)=\int_{t_1}^{t}y(s)\dd s\geq (t-t_1)(\bar y-\Lambda\bar r T)>\frac{2(t-t_1)\bar r}{T}	
\end{equation}
From this follows that
$t_2:=\sup E<T$ and $x(t_2)=\bar r$. Moreover, since $y(t)>0$ for $t\in E$, we obtain that $x(t)$ is strictly increasing on $[t_1,t_2]$. Finally, by the first inequality in \eqref{eq:stime_crossing} for $t=t_2$, we have
\begin{equation*}
t_2-t_1\leq \frac{x(t_2)-x(t_1)}{\bar y-\Lambda \bar r T}
\end{equation*}
completing the proof.
\end{proof}

\begin{lemma} \label{lemma:elastic} 
	Let us fix $q\colon \R\times \R\to \R$, satisfying 
	\ref{cond:q1_star}, \ref{cond:q0_star} and \ref{cond:qinfty_star}. We then choose any $\hat{q}\colon \R\times \R\to \R$, satisfying \ref{cond:q1_star}, \ref{cond:qinfty_star} for the same $\bar r$ as $q$, and such that $\hat{q}(t,x)=q(t,x)$ for every $(t,x)\in [0,T]\times[-\bar{r},\bar{r}]$. Then there exists $\hat{r}>0$ such that every $T$-periodic solution $z(t)=(x(t),y(t))$ of
	\begin{equation}\label{eq:HSsec_order_mod}
	\begin{cases}
	\dot x= y\\
	\dot y= -\hat q(t,x)x
	\end{cases}
	\end{equation}
	satisfies $\abs{z(t)}< \hat r$ for every $t\in[0,T]$. Moreover, the constant $\hat r$ depends only on $\bar r, b_1,b_2$ and on the restriction of $q$ to $[0,T]\times[-\bar{r},\bar{r}] $, but not on the choice of $\hat q$.	
\end{lemma}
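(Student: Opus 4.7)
The plan is to argue by contradiction via a rescaling argument. Suppose no such $\hat r$ exists; then there is a sequence of admissible nonlinearities $\hat q_n$ (each satisfying \ref{cond:q1_star} and \ref{cond:qinfty_star} with the same $\bar r$, $b_1$, $b_2$, and agreeing with $q$ on $[0,T]\times[-\bar r,\bar r]$) together with $T$-periodic solutions $z_n=(x_n,y_n)$ of the corresponding system \eqref{eq:HSsec_order_mod} with $R_n:=\max_{[0,T]}\abs{z_n(t)}\to+\infty$. Set $w_n:=z_n/R_n$, so $\norm{w_n}_\infty=1$ and $w_n$ is $T$-periodic. The rescaled equations read $\dot w_n^1=w_n^2$ and $\dot w_n^2=-\hat q_n(t,x_n(t))\,w_n^1(t)$. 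The first observation is that $\{w_n\}$ is equi-Lipschitz: clearly $\abs{\dot w_n^1}\leq 1$; for the second component, split according to whether $\abs{x_n(t)}$ exceeds $\bar r$. When $\abs{x_n(t)}>\bar r$, by \ref{cond:qinfty_star} the coefficient $\hat q_n(t,x_n(t))$ is bounded by $M:=\max\{\norm{b_1}_\infty,\norm{b_2}_\infty\}$, giving $\abs{\dot w_n^2}\leq M$; when $\abs{x_n(t)}\leq\bar r$, we have $\hat q_n(t,x_n(t))=q(t,x_n(t))$ bounded by $\Lambda(\bar r)$ from Lemma \ref{lemma:crossing}, and $\abs{w_n^1(t)}\leq\bar r/R_n$, giving $\abs{\dot w_n^2}\leq\Lambda\bar r/R_n\to 0$. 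By Arzelà--Ascoli, a subsequence of $w_n$ converges uniformly on $[0,T]$ to some continuous $T$-periodic $w$ with $\norm{w}_\infty=1$, hence $w\not\equiv 0$.

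Next I would pass to the limit in the rescaled equations. The sequence $f_n(t):=\hat q_n(t,x_n(t))$ is bounded in $L^\infty([0,T])$, so, along a further subsequence, $f_n\rightharpoonup^*\tilde b$ weakly$^*$ in $L^\infty$. Combining this with the strong uniform convergence $w_n^1\to w^1$, the product $f_n\,w_n^1$ converges weakly in $L^1$ to $\tilde b\, w^1$; passing to the limit in the integrated form of the equations yields that $w$ satisfies, in the Caratheodory sense,
\begin{equation*}
\dot w=J\tilde B(t)w, \qquad \tilde B(t):=\begin{pmatrix}\tilde b(t)&0\\0&1\end{pmatrix}.
\end{equation*}
To locate $\tilde b$, observe that on the set $E:=\{t\in[0,T]:w^1(t)\neq 0\}$ one has $\abs{x_n(t)}=R_n\abs{w_n^1(t)}\to+\infty$, so eventually $\abs{x_n(t)}>\bar r$ and $f_n(t)\in(b_1(t),b_2(t))$; passing to the weak$^*$ limit shows $\tilde b(t)\in[b_1(t),b_2(t)]$ a.e.\ on $E$. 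On the complement, $w^1\equiv 0$ makes the value of $\tilde b$ irrelevant to the ODE, so we may redefine it within $[b_1(t),b_2(t)]$ there.

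Finally, I would invoke Corollary \ref{cor:conf_matrix} (extended to bounded measurable coefficients either by approximation of $\tilde b$ in $L^1$ by continuous functions together with continuous dependence of the fundamental matrix on the coefficient, or directly via the monotonicity of the Maslov index in the Caratheodory setting) to conclude that $\tilde B$ is $T$-nonresonant, with Maslov index $i_\infty$. Then the only $T$-periodic solution of $\dot w=J\tilde B(t)w$ is $w\equiv 0$, contradicting $\norm{w}_\infty=1$. The main technical obstacle is precisely this last step: justifying the monotonicity/sandwich property of the Maslov index beyond the continuous framework of Proposition \ref{prop:Maslov_monot}, and, closely related, ensuring that the weak$^*$ limit $\tilde b$ inherits from the sandwich $b_1\leq f_n\leq b_2$ (valid only where $\abs{x_n}>\bar r$) a globally usable sandwich, which is why the argument is set up to exploit that the ``bad'' set $\{\abs{x_n}\leq\bar r\}$ is exactly where $w_n^1$ is forced to be $O(1/R_n)$.
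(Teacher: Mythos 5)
Your argument is essentially correct, but it follows a genuinely different route from the paper. The paper does not rescale: it first uses the elastic property of second order ODEs to push the \emph{whole} orbit of the hypothetical unbounded periodic solutions to infinity, then invokes Lemma \ref{lemma:crossing} together with a uniform bound on the rotation number to show that the set $J_n=\{t:\abs{\hat x_n(t)}\leq \bar r\}$ has measure tending to zero, replaces the pointwise coefficient $\bar q_n(t)=\hat q_n(t,\hat x_n(t))$ by a linear interpolation $\tilde q_n$ sandwiched between perturbations of $b_1,b_2$, and finally uses the weak compactness of the sandwiched set $K\subset L^p$ and the weak continuity of $q\mapsto \Psi_q(T)$ to conclude that $\Psi_{\bar q_n}(T)$ is eventually nonresonant. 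Your blow-up argument ($w_n=z_n/R_n$, Arzel\`a--Ascoli, weak$^*$ limit of the coefficients) bypasses the elastic property, the crossing estimates and the interpolation construction entirely: the set where $\abs{x_n}\leq\bar r$ is harmless for you because there $\dot w_n^2=O(1/R_n)$, and the sandwich $b_1\leq \tilde b\leq b_2$ is recovered on $\{w^1\neq 0\}$ (to make the \lq\lq eventually\rq\rq{} uniform you should work on the sets $\{\abs{w^1}>1/k\}$ and take a union, but this is routine). What your approach buys is a shorter and more self-contained proof of the a priori bound; what it does not buy is an escape from the one genuinely delicate point, which you correctly identify: both proofs ultimately need the nonresonance of a linear system whose coefficient is only an $L^\infty$ function sandwiched a.e.\ between $b_1$ and $b_2$, i.e.\ the extension of Proposition \ref{prop:Maslov_monot} and Corollary \ref{cor:conf_matrix} to measurable coefficients (the paper applies them to elements of $K\subset L^p$ exactly as you would apply them to $\tilde b$). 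One caveat: of your two suggested fixes, the approximation-plus-continuous-dependence route is insufficient on its own, since a limit of matrices in the open component $\Gamma^\bullet$ may land on the resonant surface $\Gamma^0$; only the direct monotonicity of the Maslov index (equivalently, of the rotation number) in the Carath\'eodory setting closes the argument, and that is the tool the paper is implicitly relying on via \cite{Liu}.
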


\begin{proof}
We prove the Lemma by contradiction. Suppose that, for every $\hat r>0$  there exists $\hat q$ as in the statement for which the system \eqref{eq:HSsec_order_mod} admits a $T$-periodic solution whose orbit is not contained in $\BB(0,\hat r)$. By the elastic property for second order ODEs (cf.~\cite{DZ}), we can recover a sequence of functions $\hat q_n$ as in the statement, each admitting a $T$-periodic solution $\hat z_n$ such that $\abs{\hat z_n(t)}>n+C_1$ for every $t\in[0,T]$, where we pick the constant $C_1$ as 
\begin{equation*}
C_1=\left(\Lambda T+\frac{2}{T}+1\right)\bar r \qquad \text{where}\quad \Lambda:=\max \{\abs{q(t,x)}: (t,x)\in[0,T]\times [-\bar r,\bar r]\}
\end{equation*}

Let us now consider the sets $J_n=\{t\in[0,T]: \abs{\hat x_n(t)}\leq \bar r\}$. We claim that $\meas J_n\to 0$. First of all, we show that each set $J_n$ is a union of at most $k$ disjoint intervals, where $k$ is independent of $n$. Let us denote with $t\to (\hat{\phi}_n(t),\hat r_n(t))$ the lift of each solution $\hat x_n(t)$ to the strip. Since all the maps $t\to\hat q(t,\hat z_n(t))$ are uniformly bounded by  $\max \{b_2(t),\Lambda\}$, by the theory of linear second order ODES we obtain an uniform bound on their rotation around the origin, namely there exists an integer $C_2>0$ such that $\abs{\hat\phi_n(0)-\hat\phi_n(T)}<2\pi C_2$ for every $n\in \N$. By Lemma \ref{lemma:crossing}, each connected component of $\Pi^{-1}\bigl([-\bar r,\bar r]\times \R \setminus \BB(0,C_1) \bigr)\subseteq \R\times [C_1,+\infty[\,$ can be crossed only in one direction, hence only once, by each orbit $t\to (\hat{\phi}_n(t),\hat r_n(t))$. Hence each set $J_n$ is the sum of at most $C_2+1$ disjoint intervals, which we write as
\begin{equation*}
J_n=\bigcup_i\, [t^{i,n}_1,t^{i,n}_2]
\end{equation*}
Moreover, Lemma \ref{lemma:crossing} provides also an estimate of the length of each of these intervals, so that, for $n\to+\infty$,
\begin{equation} \label{eq:meas_Jn}
\meas J_n \leq (C_2+1)\frac{2\bar r}{C_1+n-\Lambda\bar rT} \to 0	
\end{equation}

\medbreak
Let us now define $\bar q_n(t):=\hat q_n(t,\hat x_n(t))$ and set
\begin{equation*}
	\tilde  q_n(t):=\begin{cases}
	\bar q_n(t) & \text{ for $t\in [0,T]\setminus J_n$}\\
	\frac{t-t^{i,n}_1}{t^{i,n}_2-t^{i,n}_1}\bar q_n(t^{i,n}_1)+	\frac{t^{i,n}_2-t}{t^{i,n}_2-t^{i,n}_1}\bar q_n(t^{i,n}_2)& \text{ for $t\in[t^{i,n}_1,t^{i,n}_2]$}
	\end{cases}
\end{equation*}
Analogously we set, for $k=1,2$
\begin{equation*}
b_k^n(t):=\begin{cases}
b_k(t) & \text{ for $t\in [0,T]\setminus J_n$}\\
\frac{t-t^{i,n}_1}{t^{i,n}_2-t^{i,n}_1}b_k(t^{i,n}_1)+	\frac{t^{i,n}_2-t}{t^{i,n}_2-t^{i,n}_1}b_k(t^{i,n}_2)& \text{ for $t\in[t^{i,n}_1,t^{i,n}_2]$}
\end{cases}
\end{equation*}
and observe that, by construction
\begin{equation*}
	b_1^n(t)<\tilde  q_n(t)<b_2^n(t) \qquad \text{for every $t\in[0,T]$}
\end{equation*}
Moreover, by \eqref{eq:meas_Jn}, we have, for $1\leq p<\infty$,
\begin{align}\label{eq:approx_conv}
	\norm{\tilde  q_n-\bar q_n}_{L^p([0,T])}\to 0 &&
	\norm{ b_k^n-b_k}_{\CC^0([0,T])}\to 0
\end{align}

Let us fix $\epsilon>0$ such that the planar systems of the form \eqref{eq:HSsec_order} with coefficients $b_1(\cdot)-\epsilon$ and $b_2(\cdot)+\epsilon$ have corresponding $T$-Maslov index $i_\infty$. Let us then define, for $1<p<\infty$, the set $K\subseteq L^p([0,T])\cap L^\infty([0,T])$:
\begin{equation*}
K:=\{q\in L^p([0,T]): b_1(t)-\epsilon\leq q(t)\leq b_2(t)+\epsilon \quad \text{a.e.~on $[0,T]$}\}
\end{equation*}
We observe that the set $K$ is closed, convex, bounded in $L^p$, and therefore it is weakly compact. Furthermore,  by \eqref{eq:approx_conv}, there exists $\bar n\in \N$ such that, for every $n\geq \bar n$, we have $b_1^n,b_2^n\in K$.

Let us now consider the map $\psi_T\colon L^p([0,T])\cap L^\infty([0,T])\to \Sp(1)$, associating to each function $q$ the evolution matrix $\psi(q):=\Psi_q(T)$ of the system  \eqref{eq:HSsec_order} at time $T$. We note that the map $\psi$ is continuous with respect to the  $L^p$-weak topology on its domain, and any matrix norm on $\Sp(1)$. Hence the set $\psi(K)$ is compact and connected in $\Sp(1)$. Moreover, by Proposition \ref{prop:Maslov_monot} we obtain that $\psi(K)$ is contained in one open nonresonant component $\Gamma^\bullet\in\{\Gamma^+,\Gamma^-\}$ of $\Sp(1)$. 

Thus, by \eqref{eq:approx_conv} and the properties of $\psi$, we deduce that there exists $n^*>\bar n$ such that, for every $n\geq n^*$, $\psi(\tilde q_n)\in \Gamma^\bullet$. To see this, let us set $\delta=\dist(\psi(K), \Sp(1)\setminus \Gamma^\bullet)>0$, where the distance is positive due to the compactness of $\psi(K)$. Suppose now by contradiction that $\psi(\bar q_n)$ is not definitively in $\Gamma^\bullet$. Then, using also the weakly compactness of $K$, we can construct a subsequence $n_k$ such that $\norm{\psi(\tilde q_{n_k})-\psi(\bar q_{n_k})}>\delta$ and $\tilde q_{n_k}\rightharpoonup w\in K$. By \eqref{eq:approx_conv} it follows that $\bar q_{n_k}\rightharpoonup w$ and, by the continuity of $\psi$ with respect to the weak $L^p$ topology of the domain, we have $\psi(\bar q_{n_k})\to \psi(w)$ and $\psi(\tilde q_{n_k})\to \psi(w)$, contradicting $\norm{\psi(\tilde q_{n_k})-\psi(\bar q_{n_k})}>\delta$.
 
Therefore $\psi(\bar q_{n})\notin \Gamma_0$ for $n\geq n^*$, meaning that the linear systems of the form \eqref{eq:HSsec_order} with coefficients $\bar q_n(t)$  do not have any nontrivial $T$-periodic solution. This  contradicts the existence of the sequence $\hat z_n$ of $T$-periodic solutions, since each $\hat z_n$ solves also \eqref{eq:HSsec_order} with coefficient $\bar q_n(t)$. The Lemma is thus proved.

\end{proof}

\begin{theorem} \label{th:linlike_inf}
	Let us consider the Hamiltonian system \eqref{eq:HSsec_order}  and assume that \ref{cond:q1_star}, \ref{cond:q0_star}, with $i_\A=i_0$ for the matrix $\A(t)$ of \eqref{eq:def_matrix}, and \ref{cond:qinfty_star} are satisfied. Then the same conclusions of Theorem \ref{th:main} hold.
\end{theorem}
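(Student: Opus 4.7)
The plan is to reduce Theorem \ref{th:linlike_inf} to the previously established Theorem \ref{th:linlike} by surgically modifying $q$ outside a large compact set, rendering it truly asymptotically linear while preserving the entire set of $T$-periodic solutions. The linchpin of the reduction is Lemma \ref{lemma:elastic}, whose a priori bound depends only on $\bar r$, $b_1$, $b_2$ and the restriction of $q$ to $[0,T]\times[-\bar r,\bar r]$, and is therefore uniform across admissible modifications.

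Concretely, I would first apply Lemma \ref{lemma:elastic} with $\hat q = q$ to fix a constant $\hat r>0$ such that every $T$-periodic solution of \eqref{eq:HSsec_order} lies in $\BB(0,\hat r)$. Then I would choose $\epsilon>0$ small enough that the matrix
\[
\B_\epsilon(t) := \begin{pmatrix} b_1(t)+\epsilon & 0 \\ 0 & 1 \end{pmatrix}
\]
satisfies $\B_1(t)\leq \B_\epsilon(t)\leq \B_2(t)$ for every $t\in[0,T]$; by Corollary \ref{cor:conf_matrix} it is then $T$-nonresonant with Maslov index $i_\infty$. Using a smooth cutoff, I would construct a modified coefficient $\hat q$ that coincides with $q$ for $\abs{x}\leq \hat r+1$, equals $b_1(t)+\epsilon$ for $\abs{x}\geq \hat r+2$, and is obtained by a convex-combination interpolation in the intermediate strip. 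Routine checks confirm that $\hat q$ fulfils \ref{cond:q1_star}, \ref{cond:q0_star} (with the same index $i_0$, since $\hat q$ and $q$ agree near the origin) and \ref{cond:qinfty_star}, and additionally the asymptotic linearity condition \ref{cond:qinfty} with limit matrix $\B_\epsilon$ of index $i_\infty$.

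I would then invoke Theorem \ref{th:linlike} on the system with coefficient $\hat q$, obtaining at least $\abs{i_\infty-i_0}$ $T$-periodic solutions, plus one more if $i_0$ is even and $i_0\neq i_\infty$. A second application of Lemma \ref{lemma:elastic}, this time to the constructed $\hat q$ but with the \emph{same} constant $\hat r$, gives $\abs{z(t)}<\hat r$ for each such solution, hence in particular $\abs{x(t)}<\hat r+1$. In that region $\hat q$ and $q$ coincide pointwise, so every $T$-periodic solution of the modified system also solves the original one, yielding the desired lower bound.

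The main subtle point is that $\hat r$ must be fixed \emph{before} $\hat q$ is constructed and must remain valid after the replacement; this is exactly the uniformity statement embedded in Lemma \ref{lemma:elastic} and is the reason why its proof was done with such care. The only other delicate step is arranging the correct asymptotic Maslov index for the modified coefficient, which is handled by the sandwich argument of Corollary \ref{cor:conf_matrix} together with the monotonicity of the Maslov index.
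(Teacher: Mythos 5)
Your proposal is correct and follows essentially the same route as the paper: fix the uniform a priori radius from Lemma \ref{lemma:elastic} first, truncate $q$ outside that radius to an asymptotically linear coefficient sandwiched between $b_1$ and $b_2$ (the paper uses the midpoint $(b_1+b_2)/2$ where you use $b_1+\epsilon$, an immaterial difference), apply Theorem \ref{th:linlike}, and then use the uniformity of the elastic bound to confine all periodic solutions of the modified system to the region where the two systems coincide. No gaps.
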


\begin{proof}
	
	Let $\hat r$ be the radius provided by Lemma \ref{lemma:elastic} .
	We set
	$$
	b(t)=\frac{b_1(t)+b_2(t)}{2}
	$$
	and denote with $\B(t)$ the associated matrix for the corresponding planar system, as in \eqref{eq:def_matrix}. 	Clearly, by Corollary \ref{cor:conf_matrix}, we have  $i_\B=i_\infty$.
	
	Let $\mu\colon [0,1]\to [0,1]$ be a $\CC^\infty$ function such that $\mu(0)=0$, $\mu(1)=1$ and $\mu'(0)=\mu'(1)=0$.
	We define $\hat q \colon \R\times \R\to \R$ as
	\begin{equation*}
	\hat{q}(t,z):=\begin{cases}
	q(t,z) & \text{for $\abs{z}\leq\hat r$}\\
	(1-\mu(\abs{z}-\hat r))q(t,z) + \mu(\abs{z}-\hat r)b(t) & \text{for $\hat r<\abs{z}<\hat r+1$}\\
	b(t) & \text{for $\abs{z}\geq\hat r+1$}
	\end{cases}
	\end{equation*}	
	We observe that the function $\hat{q}$ satisfies 	\ref{cond:q1_star},\ref{cond:q0_star} and \ref{cond:qinfty}. Hence, by Theorem \ref{th:linlike}, the same conclusions of Theorem \ref{th:main} hold for  system \eqref{eq:HSsec_order_mod}.
	
	To complete our proof, we have just to show that all the $T$-periodic solutions of \eqref{eq:HSsec_order_mod} are also solutions of \eqref{eq:HSsec_order}. To do so, we observe that $\hat q$ satisfies all the assumptions of Lemma \ref{lemma:elastic}. Hence, every $T$-periodic solutions of \eqref{eq:HSsec_order_mod} is contained in the ball $\BB(0,\hat r)$. Within this ball, systems \eqref{eq:HSsec_order_mod} and \eqref{eq:HSsec_order} coincide, thus all the $T$-periodic solutions of \eqref{eq:HSsec_order_mod} are also solutions of~\eqref{eq:HSsec_order}.
	
\end{proof}

We notice that, although in a different and more sophisticated setting, the general strategy of modifying the Hamiltonian function near infinity, in order to reduce a complex twist behaviour to an asymptotically linear system, has also been recently employed in \cite{FG,FU}, to obtain higher dimensional generalizations of the Poincaré--Birkhoff Theorem.

\begin{remark}[Linear-like behaviour in planar systems] \label{rem:linlike_planar} Although our exposition regards second order ODEs, linear-like behaviour can be similarly studied for general planar Hamiltonian systems of form \eqref{eq:HSplanar}, by requiring the existence of two matrices $\A_1,\A_2\in\CC([0,T],\R^{2\times 2})$, $T$-nonresonant with the same Maslov index $i_0$ (resp.~$i_\infty$), such that
	\begin{equation} \label{eq:planar_llcond}
		\A_1(t)\leq D_zH(t,z) \leq \A_2
	\end{equation}
holds for every $t\in[0,T]$ and every $z$ with $\abs{z}$ sufficiently small (resp. sufficiently large).
Indeed, the proof of Theorem \ref{th:linlike} actually develops in the planar form \eqref{eq:planar_llcond}, and the structure as second order ODE is nowhere used.

The adaptation of the condition at infinity in Theorem \ref{th:linlike_inf}, in the spirit of \cite{Liu}, is instead slightly more delicate, since the prolongation technique shall be applied directly on the Hamiltonian function, in order to preserve the symplectic structure.

\end{remark}

\begin{remark}\label{rem:counter}
The case of linear-like behaviour should also help the comprehension of the counterexample to \cite{MRZ} proposed in  \cite{CMMR_04}. Our approach shows clearly that 
(asymptotic) linearity assures two main qualitative properties of the Poincaré map of the system: one purely rotational, and the other expressed in terms of topological degree. Only combining both these properties we can obtain a full multiplicity result. Indeed, we observe that the modified Poincaré--Birkhoff Theorem in \cite{MRZ} considers not only rotational properties, but also implicitly recovers a degree condition by combining, in a neighbourhood of the origin, the weak twist condition with the area preserving assumption. However,  area preservation cannot be exploited if we consider weak twist on the outer boundary, and indeed we can see that the example of \cite{CMMR_04} fails the degree conditions we employ in this paper. With the notation above, the system in \cite{CMMR_04} is such that $\deg \FF(\cdot,r)=0$ for $r$ sufficiently large, 
whereas with Lemma \ref{lemma:rot_infty} we show that an asymptotically linear system with the same weak twist behaviour necessarily satisfies  $\deg\FF(\cdot,r)=2$.
 
Our results for linear-like behaviour shows that  asymptotic linearity is not necessary; however, in situations of additional weak twist, rotational properties alone are no longer sufficient, and the assumptions on the systems shall also suitably characterize the topological degree of the Poincaré map. 
\end{remark}

\medbreak

\paragraph{Acknowledgments} 
The authors are supported by FCT--Fundação para a Ciência e Tecnologia, under the project UID/MAT/04561/2013. P.G. has been also partially 
supported by the Gruppo Nazionale per l’Analisi Matematica, la Probabilità e le loro Applicazioni
(GNAMPA) of the Istituto Nazionale di Alta Matematica (INdAM).

The authors thank Carlota Rebelo for the useful discussions.


\begin{thebibliography}{20}\footnotesize \setlength{\itemsep}{-0.8mm}
	
\bibitem{Abb} A. Abbondandolo, \emph{Morse theory for Hamiltonian systems}, Chapman \& Hall/CRC research notes in mathematical series 425, 2001. 

\bibitem{AZ} H. Amann, E. Zehnder, Nontrivial solutions for a class of nonresonance problems and applications to nonlinear differential
equations, \emph{Ann. Scuola Norm. Sup. Pisa Cl. Sci.} \textbf{7.4} (1980), 539--603.

\bibitem{BG}{ A. Boscaggin and M. Garrione, 
Resonance and rotation numbers for planar Hamiltonian systems: multiplicity results via the Poincaré--Birkhoff theorem, \emph{Nonlinear Anal.} \textbf{74} (2011), 4166--4185. }

\bibitem{CMMR_04}{J. Campos, A. Margheri, R. Martins and C. Rebelo, A note on a modified version of the Poincar\'e--Birkhoff theorem, 
	\emph{J. Differential Equations} \textbf{203}  (2004),  55--63.}

\bibitem{CZ}{C. Conley and E. Zehnder, Morse-type index theory for flows and periodic solutions for Hamiltonian equations, \emph{Comm. Pure Appl. Math.}
 \textbf{37} (1984), 207--253.}

\bibitem{DalReb} F. Dalbono and C. Rebelo, Poincaré--Birkhoff fixed point theorem
	and periodic solutions of asymptotically linear planar Hamiltonian
	systems, \emph{Rend. Semin. Mat. Univ. Politec. Torino} \textbf{60.4} (2002), 233--263.

\bibitem{Do}{Y. Dong, Index theory, nontrivial solutions, and asymptotically linear second-order Hamiltonian systems,
\emph{J.  Differential Equations} \textbf{214} (2005), 233--255.}


\bibitem{DZ}{F. Dalbono and F. Zanolin, Multiplicity results for asymptotically linear equations using the rotation number approach, \emph{Mediterr. J. Math} \textbf{4} (2007), 127--149.}

\bibitem{FG} A. Fonda and P. Gidoni, An avoiding cones condition for the Poincaré--Birkhoff Theorem, \emph{J. Differential Equations} \textbf{262} (2017), 1064--1084.

\bibitem{FM1} A. Fonda and J. Mawhin, Iterative and variational methods for the solvability of some semilinear equations in Hilbert spaces,
\emph{J. Differential Equations} \textbf{98} (1992), 355--375. 


\bibitem{FM2} A. Fonda and J. Mawhin,	An iterative method for the solvability of semilinear equations in Hilbert spaces and applications,
in: \emph{Partial Differential Equations and Other Topics} (J. Wiener and J. K. Hale eds.), Longman, London (1992), 126--132. 

\bibitem{FSZ} A. Fonda, M. Sabatini and F. Zanolin, Periodic solutions of
perturbed Hamiltonian systems in the plane by the use of the
Poincaré--Birkhoff theorem. \emph{Topol. Methods Nonlinear Anal.} \textbf{40.1} (2012),  29--52.

\bibitem{FU} A. Fonda and A.J. Ureña, A higher dimensional Poincaré--Birkhoff theorem for Hamiltonian flows. \emph{Ann. Inst. H. Poincaré Anal. Non Linéaire} \textbf{34} (2017), 679--698.

\bibitem{GaMaRe} M. Garrione, A. Margheri and C. Rebelo, Nonautonomous nonlinear ODEs: nonresonance
conditions and rotation numbers, \emph{preprint}.
	
\bibitem{Gl}{I.M. Gel'fand and V.B. Liskii, On the structure of the regions of stability of linear canonical systems of differential equations with periodic coefficients,   \emph{Am. Math. Soc. Transl. Ser. 2} \textbf{8} (1958), 143--181}

	
\bibitem{Liu} C.-G. Liu, A note on the monotonicity of the Maslov-type index of linear Hamiltonian systems with applications. \textit{Proc. Roy. Soc. Edinburgh Sect. A} \textbf{135} (2005),  1263--1277. 


\bibitem{Long}{Y. Long, A Maslov type index for symplectic paths,
\emph{Topological Meth. Nonlinear Anal.}         \textbf{10} (1997), 47--78.}

\bibitem{MRT} A. Margheri, C. Rebelo and P. Torres, On the use of Morse index and
rotation numbers for multiplicity of resonant BVPs, \emph{J. Math. Anal. Appl.} \textbf{413} (2014), 660--667.


\bibitem{MRZ} A. Margheri, C. Rebelo and F. Zanolin,
	Maslov index, Poincaré-Birkhoff theorem and periodic solutions of asymptotically linear planar Hamiltonian systems, \textit{J. Differential Equations}  \textbf{183} (2002), 342--367. 

 \bibitem{R} C. Rebelo,  A note on the Poincaré-Birkhoff fixed point theorem and periodic solutions of planar systems, \textit{Nonlinear Anal.}  \textbf{29} (1997),  291--311.

\bibitem{Sim} C.P. Simon,
	A bound for the fixed-point index of an area-preserving map with applications to mechanics,
	\textit{Invent. Math.} \textbf{26} (1974), 187--200.

	
\end{thebibliography}
\end{document}